\pgfplotsset{compat=1.15}
\DeclareMathOperator{\eco}{econv}	
\DeclareMathOperator{\conv}{conv}
\DeclareMathOperator{\cl}{cl}
\DeclareMathOperator{\epco}{\normalfont{e}^\prime conv} 	
\DeclareMathOperator{\ep}{\text{e}^\prime} 	
\DeclareMathOperator{\dom}{dom}					
\DeclareMathOperator{\epi}{epi}
\newcommand{\Ramp}{\overline{\mathbb{R}}}
\newcommand{\R}{\mathbb{R}}
\newcommand{\A}{\mathcal{A}}
\newcommand{\B}{\mathcal{B}}
\newcommand{\RTD}{\mathbb{R}^{(T)}}
\newcommand{\ci}{\left\langle}		
\newcommand{\cd}{\right\rangle}
\newcommand{\ECC}{\text{ECC}}
\newcommand{\ECCQ}{\text{ECCQ}}
\def\supp{\mathop{\rm sup}}
\newcommand{\xb}{\overline{x}}
\theoremstyle{plain}
\newtheorem{theorem}{Theorem}[section]
\newtheorem{lemma}[theorem]{Lemma}
\newtheorem{corollary}[theorem]{Corollary}
\newtheorem{proposition}[theorem]{Proposition}
\theoremstyle{definition}
\newtheorem{definition}[theorem]{Definition}
\newtheorem{example}[theorem]{Example}
\theoremstyle{remark}
\newtheorem{remark}{Remark}
\begin{document}

\title{A new insight into Lagrange duality on DC optimization}

\author{Maria Dolores Fajardo\thanks{Department of Mathematics, University of Alicante, Alicante, Spain, md.fajardo@ua.es} \and
Jos\'e Vidal\thanks{Department of Physics and Mathematics, University of Alcalá, Alcalá de Henares, Spain, j.vidal@uah.es}}

\maketitle

\begin{abstract}
In this paper we present a new Lagrange dual problem associated to a primal DC optimization problem under the additivity condition (AC). As usual for DC programming, even weak duality is not guaranteed for free and, due to this issue, we investigate conditions not only for weak, but also for strong duality between this dual and the primal DC problem. In addition, we also analyze conditions for strong duality between the primal and its standard Lagrange dual problem utilizing a revisited regularity condition which is guaranteed by a more general class of functions than the one used in a prior work (see \cite{FVR2016}) .
\end{abstract}

\textbf{Keywords:~}{Generalized convex conjugation, evenly convex function, DC problem, Lagrange duality, weak duality, strong duality, regularity condition}\\

\textbf{MSC Subject Classification.} 52A20, 26B25, 90C25, 49N15.\\

\section{Introduction}
\label{sec:Intro}

An optimization problem is called a DC optimization problem when its objective function is, maybe, non-convex but the difference of convex functions. Though it has a wide range of applications, see for instance the recent surveys \cite{TD2018,Oli2020} or \cite{TD2023}, it has not been as deeply studied as the case of convex programming. For an algorithmic point of view and diverse numeric applications, we refer the reader to \cite{WCP2018,TZGF2020,LHT2018,TFT2022,LLD2007}. 
For a more theoretical perspective, some interesting references on the basis of this topic are \cite{TD2023,AT2005,TA1997,D2020,D2022,T1995} or \cite{HT1999}. 

As shown in \cite{MLV1999} or \cite{HU1986}, it is possible to address these problems via a duality approach, opening in this way the use of the Fenchel conjugate to derive appropriate dual problems. These dual problems enrolled to the original problem of interest, called \textit{primal}, are said to satisfy weak duality when the optimal value of the latter is bigger or equal than the one of the former, zero duality gap when both values coincide and strong duality when they are equal and, additionally, the dual problem in solvable. For an outstanding overview of conjugate duality theory in different spaces, we refer the reader to \cite{B2010,R1970,BC2011,Z2002} or the book \cite{MC2022}. 

In a recent work, \cite{FV2023}, the authors of this manuscript provided a Fenchel duality theory associated with a DC problem, on the basis of the previous paper   \cite{FVR2012}, which uses the perturbational approach from \cite{ET1976} with a generalized conjugation scheme, the $c$-conjugation, defined in \cite{MLVP2011} and inspired by a survey from Martínez-Legaz, \cite{ML2005}, concerning quasiconvex programming. This conjugation pattern is linked to a class of convex functions called evenly convex functions, which generalizes the one of closed and convex functions, and are characterized  (see \cite{RVP2011}) as those functions whose epigraph is the intersection of an arbitrary family of open half spaces, i.e., it is an evenly convex set. Going backwards in time, these sets were introduced by Fenchel in \cite{F1952} in the finite dimensional case, who defined a natural polarity operation for them, preserving important properties  exhibited by the standard polarity related to closed convex sets. For almost thirty years, these sets appeared occasionally, but after 1980, they were used broadly in quasiconvex programming, see for instance \cite{ML1983} and \cite{PP1984}. Later, evenly convex sets have implemented the geometric study of linear inequality systems with strict and weak inequalities \cite{GJR2003,GR2006} and have been studied in terms of their sections and projections in \cite{KMZ2007}.

Coming back to the developed work in \cite{FVR2012}, a kind of  biconjugation theorem, which characterizes proper evenly convex functions as those proper functions which coincide with its biconjugate, allowed to obtain strong  duality sufficient conditions if the involved functions in the primal problem are evenly convex. Duality theory in optimization problems with  either evenly convex objective functions and feasible sets, or whose developed conditions for strong duality are in terms of properties related to the even convexity of certain sets, called evenly convex optimization, has been deeply studied in the last years. For a comprehensive reading in this topic, see \cite{FGRVP2020} and the references therein.

The intention of this work is to continue from another perspective two previous works by the authors on Lagrange duality in evenly convex optimization. On the one hand, in  \cite{FVR2016}, we developed a Lagrange dual problem and sufficient  conditions for strong duality, which are called regularity conditions. However, as it is indicated in that work, both the objective function and the constraints are supposed to be evenly convex functions instead of being just convex. On the other hand and following this line of generalism, one of the scopes of the recent work \cite{FV2024} goes in that regard. In that work we investigate not only a general primal DC problem, but also we recover the case of \cite{FVR2016} and characterize strong duality and zero duality gap between the primal and its standard Lagrange dual. This characterization is done supposing the involved functions to be proper and just convex and it is done via characteristic sets. The achievements from \cite{FV2024} have motivated us to continue persuading sufficient conditions relaxing those properties somehow imposed in the previous scenario stated in \cite{FVR2016}. This explanation serves as motivation for Section \ref{sec:SD_DL_standard}.

Another significant goal of \cite{FV2024} was the development of a second Lagrange dual for the DC primal problem, on the basis of the standard Lagrange dual, using intrinsic properties of the $c$-conjugation scheme, being both dual problems equivalent whenever an even convexity requirement holds. The structure of this new dual has inspired us in the obtainment of a third Lagrange dual problem. This new dual will be detailed in Section \ref{sec:Dual_problems}.

The rest of the paper is organized as follows. Section \ref{sec:Preliminaries} contains all the necessary results used in the paper. Section \ref{sec:Dual_problems} develops a new Lagrange dual problem which is intrinsically related to the one obtained in \cite{FV2024}. In Section \ref{sec:SD_DL_standard} we obtain a new regularity condition for a general primal and its standard Lagrange dual problem where the involved functions are simply proper convex functions. The purpose of Section \ref{sec:SD_DL_tilde} is to characterize strong Lagrange duality for a general DC primal and the new Lagrange problem developed in Section \ref{sec:Dual_problems}. Finally, in Section \ref{sec:Conclusions} we summarize the main achievements of this manuscript leaving some open ideas for future research.

\section{Preliminaries}
\label{sec:Preliminaries}
Throughout the paper, $X$ represents a non-trivial separated locally convex space whose topology is the one induced by its continuous dual space $X^*$. We denote by $\ci x,x^*\cd$ the value of the continuous linear functional $x^* \in X^*$ at $x \in X$. For a subset $D$ of $X$, we denote by $\conv D$ and $\cl D$ its convex hull and closure, respectively. The indicator function of $D$, $\delta_D:X \rightarrow \Ramp:=\R\cup\left\{\pm\infty\right\}$, is denoted by
\begin{equation*}
\delta_D\left( x\right) =\left\{ 
\begin{array}{ll}
1, &~~~ \text{if }x\in D , \\ 
+\infty, &~~~ \text{otherwise.}
\end{array}
\right. 
\end{equation*} 
We use the notation $\R_+=[0,+\infty[$ and $\R_{++}=]0,+\infty[$.

Evenly convex sets, briefly e-convex sets, present the following characterization which is far more manageable than their initial definition.

\begin{definition}[Def.~1, \cite{DML2002}]
\label{def:Econvex}
A set $C\subseteq X$ is e-convex if for every point $x_0\notin C$, there exists $x^*\in X^*$ such that $\ci x-x_0,x^*\cd<0$, for all $x\in C$.
\end{definition}
If $D$ is a subset of $X$, its \textit{e-convex hull}, denoted by $\eco D$, is the smallest e-convex set containing $D$ which, moreover, verifies
\begin{equation*}
	D\subseteq \eco D\subseteq \cl \conv D.
\end{equation*}
Clearly, the e-convex hull operator is closed under arbitrary intersections and, thanks to Hahn-Banach theorem, every closed or open convex set is also e-convex. The converse statement is false in general; see \cite[Ex.~2.2]{FV2023}.

For a function $f:X\to\Ramp$, we denote by $\dom f:=\left\{ x\in X~:~f(x)<+\infty\right\}$ and $\epi f:=\left\{(x,\alpha)\in X\times\R~:~f(x)\leq\alpha\right\}$ its effective domain and its epigraph, respectively. It is \textit{proper} if $f(x)> -\infty$, for all $x \in X$, and its effective domain is non-empty. Its \textit{lower semicontinuous hull}, denoted by $\cl f$, is the function verifying $\epi \cl f =\cl \epi f$, and $f$ is called \textit{lower semicontinuous} if $f=\cl f$.
In a bit different way, the \textit{e-convex hull} of a proper function $f$, briefly denoted by $\eco f$, is its largest e-convex minorant. Let us observe that it is not defined as that function verifying $\epi\eco f=\eco\epi f$; see \cite[Ex.~1]{FV2024}. The class of e-convex functions contains strictly the class of lower semicontinuous convex functions, see \cite[Ex.~2.1]{FV2017}, due to the strict inclusion relation between e-convex sets and closed convex sets.
The following definition is taken from \cite{MLVP2011}.
\begin{definition}[Def.~15, \cite{MLVP2011}]\label{definition:eaffine}
A function $a:X\rightarrow \overline{\mathbb{R}}$ is said to be \emph{e-affine} if there exist $x^{\ast },y^{\ast }\in X^{\ast }$ and $\alpha ,\beta \in \mathbb{R}$ such that 
\begin{equation*}\label{equation:general_eaffine_function}
a\left( x\right) =\left\{ 
\begin{array}{ll}
\left\langle x,x^{\ast }\right\rangle -\beta, &~~~ \text{if }\left\langle
x,y^{\ast }\right\rangle <\alpha , \\ 
+\infty, &~~~ \text{otherwise.}
\end{array}
\right. 
\end{equation*}
\end{definition}
For any function $f:X\rightarrow \overline{\mathbb{R}}$, $\mathcal{E}_{f}$ denotes the set of all e-affine functions minorizing $f$, that is, $\mathcal{E}_{f}:=\left\{ a \, : \, X\rightarrow \overline{\mathbb{R}} \, , \, a\text{ is e-affine and }a\leq f\right\}$. From \cite{MLVP2011}, we have the next characterization for proper e-convex functions.

\begin{theorem}[Th.~16, \cite{MLVP2011}]\label{theorem:charpef}
Let $f:X\rightarrow \overline{\mathbb{R}}$ such that $f$ is not identically $+\infty $ or $-\infty$. Then, $f$ is
a proper e-convex function if and only if $f=\supp \left\{ a \, : \, a\in \mathcal{E}_{f}\right\}$.  
\end{theorem}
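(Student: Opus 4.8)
The plan is to establish the two implications separately, treating the converse (sufficiency of the representation) as routine and the direct implication as the substantial part. For sufficiency, suppose $f=\supp\left\{a:a\in\mathcal{E}_f\right\}$ with $f$ not identically $\pm\infty$. I would first observe that every e-affine function is e-convex: writing $a(x)=\ci x,x^*\cd-\beta$ on $\left\{\ci x,y^*\cd<\alpha\right\}$ and $+\infty$ elsewhere as in Definition \ref{definition:eaffine}, its epigraph is the intersection of the open half-space $\left\{(x,t):\ci x,y^*\cd<\alpha\right\}$ with the closed half-space $\left\{(x,t):\ci x,x^*\cd-t\le\beta\right\}$ of $X\times\R$, both of which are e-convex. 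Since $\epi\supp_i a_i=\bigcap_i\epi a_i$ and the e-convex hull operator is closed under arbitrary intersections, $f$ is e-convex. Properness follows because $\mathcal{E}_f\neq\emptyset$ (otherwise $\supp$ over the empty family would give $f\equiv-\infty$), so $f\ge a>-\infty$ everywhere for any fixed $a\in\mathcal{E}_f$, while $\dom f\neq\emptyset$ since $f\not\equiv+\infty$.

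For necessity, let $f$ be proper e-convex and put $g:=\supp\left\{a:a\in\mathcal{E}_f\right\}$; the inequality $g\le f$ is immediate, so the work is to prove $g\ge f$ pointwise. Fix $x_0$ and a real $r<f(x_0)$, so $(x_0,r)\notin\epi f$; it suffices to produce $a\in\mathcal{E}_f$ with $a(x_0)$ exceeding $r$ (possibly $+\infty$), since letting $r$ increase toward $f(x_0)$ then yields $g(x_0)\ge f(x_0)$. As $\epi f$ is an e-convex subset of $X\times\R$ and $(X\times\R)^*$ is identified with $X^*\times\R$, Definition \ref{def:Econvex} provides $(x^*,s)\in X^*\times\R$ with $\ci x-x_0,x^*\cd+s(\lambda-r)<0$ for all $(x,\lambda)\in\epi f$; fixing $x\in\dom f$ and letting $\lambda\to+\infty$ forces $s\le 0$.

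If $x_0\in\dom f$, then evaluating the separation at $(x_0,f(x_0))\in\epi f$ gives $s(f(x_0)-r)<0$, hence $s<0$. Normalizing $s=-1$, the ordinary affine (thus e-affine) function $a(x):=\ci x-x_0,x^*\cd+r$ satisfies $a(x)<f(x)$ on $\dom f$ and $a\le f$ elsewhere, so $a\in\mathcal{E}_f$, while $a(x_0)=r$. Letting $r\uparrow f(x_0)$ gives $g(x_0)\ge f(x_0)$, and in particular shows $\mathcal{E}_f\neq\emptyset$.

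The decisive case, which I expect to be the main obstacle, is $x_0\notin\dom f$: here the separating functional may have $s=0$, and genuine \emph{e-affine} functions (not merely affine minorants) become indispensable. If the separation still yields $s<0$ the previous construction applies and gives $a(x_0)=r$ with $r$ arbitrarily large. If instead $s=0$, then $\ci x,x^*\cd<\ci x_0,x^*\cd=:\alpha$ for every $x\in\dom f$, so $x^*$ separates $x_0$ from $\dom f$. Taking any affine minorant $a_0(x)=\ci x,u^*\cd-\gamma\le f$ (available from the case above, since $\dom f\neq\emptyset$), I would define the e-affine function equal to $a_0(x)$ on $\left\{\ci x,x^*\cd<\alpha\right\}$ and $+\infty$ otherwise. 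It lies below $f$ because on $\dom f\subseteq\left\{\ci x,x^*\cd<\alpha\right\}$ it coincides with $a_0\le f$, whereas outside that half-space $f=+\infty$; and it takes the value $+\infty$ at $x_0$ since $\ci x_0,x^*\cd=\alpha$. Hence $g(x_0)=+\infty=f(x_0)$, which together with the previous case yields $g=f$ and completes the proof.
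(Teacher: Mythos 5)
Your proof is correct, but note that the paper itself offers nothing to compare it against: the statement is imported verbatim as Theorem 16 of \cite{MLVP2011}, with no proof reproduced. Judged on its own, your argument is complete. The sufficiency bookkeeping is right: $\epi a$ is the intersection of an open and a closed half-space of $X\times\R$ (degenerate cases $y^*=0$ give the whole space or the empty set, both still e-convex), $\epi\sup_i a_i=\bigcap_i\epi a_i$, the class of e-convex sets is closed under arbitrary intersections (your phrase ``e-convex hull operator is closed under intersections'' is loose, though the paper uses the same wording), and $f\not\equiv-\infty$ forces $\mathcal{E}_f\neq\emptyset$, whence $f>-\infty$. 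In the necessity direction you correctly apply Definition \ref{def:Econvex} to $(x_0,r)\notin\epi f$, derive $s\le 0$ for the vertical component, normalize $s=-1$ when $x_0\in\dom f$ to get affine (hence e-affine, with $y^*=0$, $\alpha>0$) minorants, and you correctly isolate the genuinely delicate subcase $x_0\notin\dom f$ with $s=0$, where truncating an affine minorant $a_0\le f$ by the open half-space $\left\{\ci x,x^*\cd<\alpha\right\}\supseteq\dom f$ with $\ci x_0,x^*\cd=\alpha$ produces a non-affine e-affine minorant taking the value $+\infty$ at $x_0$ — precisely the point of the theorem. Your route differs from the one native to the cited framework: there the result is in essence Moreau-style biconjugation, i.e., the equivalence in Theorem \ref{thm:Theorem_charac} iii) that a proper $f$ is e-convex iff $f=f^{cc^{\prime}}$, combined with the observation that $f^{cc^{\prime}}=\sup_{(x^*,y^*,\alpha)\in W}\left\{c(\cdot,(x^*,y^*,\alpha))-f^c(x^*,y^*,\alpha)\right\}$ is by construction the supremum of the $c$-elementary minorants of $f$, which are exactly the functions in $\mathcal{E}_{f}$ (Definition \ref{definition:eaffine}). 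Your separation proof is more elementary and self-contained — it needs only the definition of e-convex sets applied to the epigraph — while the conjugation route buys uniformity with the machinery ($c$-conjugates, $\ep$-convexity) that the rest of the paper runs on.
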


Due to the fact that the sum of e-affine functions is not e-affine, throughout this paper we will impose a technical assumption, expressed in terms of the following set whose definition comes from \cite{FVR2012}.

\begin{definition} \label{definition:Set Efg tilde}
Consider two functions $f_1,f_2:X\rightarrow \overline{\mathbb{R}}$. An e-affine function $a:X\rightarrow \overline{\mathbb{R}}$ belongs to the set $\widetilde{\mathcal{E}}_{f_1,f_2}$ if there exist $a_{1}\in \mathcal{E}_{f_1}$, $a_{2}\in \mathcal{E}_{f_2}$ such that, denoting 
\begin{equation*}
	a_{i}(\cdot )=\left\{ 
	\begin{array}{ll}
		\left\langle \cdot ,x_{i}^{\ast }\right\rangle -\beta _{i}, &~~~ \text{if } \left\langle \cdot ,y_{i}^{\ast }\right\rangle <\alpha _{i}, \\ 
		+\infty, &~~~ \text{otherwise,}
	\end{array}
	\right.
\end{equation*}
\noindent
for $i=1,2$, then 
\begin{equation*}
a(\cdot) =\left\{ 
	\begin{array}{ll}
		\ci \cdot ,x_1^{\ast }+x_2^{\ast }\cd - (\beta_1+\beta_2), &~~~ \text{if }\ci \cdot ,y_1^{\ast }+y_2^{\ast }\cd < \alpha_1+\alpha_2, \\
		+\infty, &~~~ \text{otherwise.}
	\end{array}
	\right.
\end{equation*}
\end{definition}

\begin{definition}
Let $f_1,f_2: X \rightarrow \Ramp$ be proper functions. We say that the \textit{additivity condition} (AC) holds for $f_1$ and $f_2$ if 
\begin{equation*}
	\tag{AC}
	\eco (f_1+ f_2)   =\sup\left\{ a\,:\,a\in\widetilde{\mathcal{E}}_{f_1,f_2}\right\}.
\end{equation*}
\end{definition}

Given a proper function $f:X\to\Ramp$, let us recall its Fenchel conjugate, $f^*:X^* \to \Ramp$, defined as
\begin{equation*}
	f^*(x^*):=\sup_{x \in X}\left\{  \ci x,x^*\cd-f(x)\right\}.
\end{equation*} 
There exists an equivalence between $f$ to be lower semicontinuous and convex, and the equality $f = f^{**}$ (Fenchel biconjugation theorem). In general, this result is not true for e-convex functions; see \cite[Ex.~2.5]{FV2016SSD}. The conjugation scheme that we will use throughout the paper, which is called the $c$-conjugation scheme, is based on the generalized convex conjugation theory presented by Moreau in \cite{Mor1970} and developed in \cite{MLVP2011}. Denoting by $W:=X^* \times X^* \times \R$, it uses the coupling function $c:X\times W\to\Ramp$
\begin{equation*}
	c(x,(x^*,y^*,\alpha))=\left\{
	\begin{array}{ll}
		\ci x,x^*\cd, &~~~ \text{if } \ci x,y^*\cd<\alpha,\\
		+\infty, & ~~~ \text{otherwise,}
	\end{array}
	\right.
\end{equation*}
and, for a proper function $f:X\to\Ramp$, its \emph{$c$-conjugate} $f^c:W \rightarrow \Ramp$ is defined as
\begin{equation*}
	f^c(x^*,y^*,\alpha):=\sup_{x \in X}\left\{ c(x,(x^*,y^*,\alpha))-f(x)\right\}.
\end{equation*}
Let us observe that, if  $H_{y^*,\alpha}^{-}:=\{x\in X : \ci x,y^*\cd < \alpha \}$, then
\begin{equation*}
	f^c(x^*,y^*,\alpha)= \left \{
	\begin{array}{ll}
		f^*(x^*), &~~~ \text{if } \dom f \subseteq H_{y^*,\alpha}^{-},\\
		+\infty, & ~~~ \text{otherwise.}
	\end{array}
	\right.
\end{equation*}
This conjugation scheme is complemented by the use of an additional coupling function $c^\prime:W\times X\to\Ramp$ defined as
\begin{equation*}
	c^\prime((x^*,y^*,\alpha),x):=c(x,(x^*,y^*,\alpha)),
\end{equation*}
which, for a proper function $h:W\to\Ramp$, allows to define its \emph{$c^\prime$-conjugate} function, $h^{c^\prime}:X \rightarrow \Ramp$, as follows
\begin{equation*}
	h^{c^\prime}(x):=\sup_{(x^*,y^*,\alpha)\in W}\left\{c^{\prime}((x^*,y^*,\alpha),x)-h(x^*,y^*,\alpha)\right\}.
\end{equation*}
This conjugation pattern gives priority to $-\infty$, i.e., the sign convention is 
\begin{equation*}
	(+\infty)+(-\infty)=(-\infty)+(+\infty)=(+\infty)-(+\infty)=(-\infty)-(-\infty)=-\infty.
\end{equation*}
The e-affine functions $c(\cdot,(x^*,y^*,\alpha))-\beta :X \rightarrow \Ramp$, with $(x^*,y^*,\alpha) \in W $ and $\beta \in \R$, are called \emph{c-elementary} in generalized convex duality theory (see \cite{Mor1970}) and functions $c^{\prime}(\cdot,x)-\beta : W \rightarrow \Ramp$, with $x\in X$ and $\beta \in \mathbb{R}$, are called \emph{c}$^{\prime }$-\emph{elementary}. Rewriting Theorem \ref{theorem:charpef}, it holds that any proper e-convex function $f: X \rightarrow \Ramp$ is the pointwise supremum of a set of $c$-elementary functions. Extending this concept, \cite{FVR2012} introduced the \emph{$\ep$-convex functions} as convex functions $g:W \rightarrow \Ramp$ which can be expressed as the pointwise supremum of a set of $c^\prime$-elementary functions. The \emph{$\ep$-convex hull} of any function $g:W\rightarrow\Ramp$, $\epco g$, is defined as the largest $\ep$-convex minorant function  of $g$. The next theorem from \cite{ML2005} is the counterpart of Fenchel-Moreau theorem for e-convex and $\ep$-convex functions.
 
\begin{theorem}[Prop. 6.1, 6.2, Cor. 6.1,~\cite{ML2005}]
\label{thm:Theorem_charac}
Let $f:X\rightarrow \overline{\mathbb{R}}$ and $g:W\rightarrow \overline{\mathbb{R}}$. The following statements are true.
\begin{itemize}
	\item[i)] $f^{c}$ is e$^{\prime }$-convex; $g^{c^{\prime }}$ is e-convex.
	\item[ii)] If $f$ has a proper e-convex minorant, then $\eco f=f^{cc^{\prime }}$; $ \epco g=g^{c^{\prime }c}$.
	\item[iii)] If $f$ does not take on the value $-\infty$, then $f$ is e-convex if and only if $f=f^{cc^{\prime}}$; $g$ is e$^{\prime }$-convex if and only if $g=g^{c^{\prime }c}$.
	\item[iv)] $f^{cc^{\prime }}\leq f$; $g^{c^{\prime }c}\leq g$. 
\end{itemize}
\end{theorem}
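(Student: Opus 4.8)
The plan is to recognize this statement as the specialization of Moreau's abstract Fenchel--Moreau theorem to the coupling $c$, and to build everything on three observations. First, the $c$-elementary functions $c(\cdot,(x^{*},y^{*},\alpha))-\beta$ coincide exactly with the e-affine functions of Definition \ref{definition:eaffine}, so by Theorem \ref{theorem:charpef} a proper e-convex function is precisely a pointwise supremum of $c$-elementary functions (and, dually, an e$^{\prime}$-convex function is a supremum of $c^{\prime}$-elementary functions). Second, the Young-type inequality $c(x,w)\le f(x)+f^{c}(w)$ holds for all $x\in X$ and $w\in W$. Third, both conjugation operators are antitone, so their composition $(\cdot)^{cc^{\prime}}$ is monotone. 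Granting these, parts (i) and (iv) are almost immediate, and the substance of the theorem is a biconjugation identity from which (ii) and (iii) follow.

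For (i) I would write $f^{c}(w)=\sup_{x\in X}\{c^{\prime}(w,x)-f(x)\}$ and read it as a pointwise supremum over $x$ of the functions $w\mapsto c^{\prime}(w,x)-f(x)$; each such function with $f(x)\in\R$ is $c^{\prime}$-elementary, while the terms with $f(x)=+\infty$ collapse to $-\infty$ under the stated sign convention and drop out, so $f^{c}$ is a supremum of $c^{\prime}$-elementary functions, i.e.\ e$^{\prime}$-convex; the claim for $g^{c^{\prime}}$ is symmetric. For (iv), the Young inequality rearranges to $c^{\prime}(w,x)-f^{c}(w)\le f(x)$ for every $w$, and taking the supremum over $w\in W$ gives $f^{cc^{\prime}}(x)\le f(x)$; again $g^{c^{\prime}c}\le g$ is symmetric.

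The core step is the identity $g=g^{cc^{\prime}}$ for every e-convex $g$ that does not attain $-\infty$. When $g$ is proper I would argue from Theorem \ref{theorem:charpef}: since $g=\sup\{a:a\in\mathcal{E}_{g}\}$, it suffices to prove $g^{cc^{\prime}}\ge a$ for each e-affine minorant $a=c(\cdot,w_{0})-\beta_{0}\le g$; from $c(x,w_{0})-g(x)\le\beta_{0}$ for all $x$ one gets $g^{c}(w_{0})\le\beta_{0}$, whence $g^{cc^{\prime}}(x)\ge c^{\prime}(w_{0},x)-g^{c}(w_{0})\ge c(x,w_{0})-\beta_{0}=a(x)$, and combining with (iv) yields equality. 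The remaining case $g\equiv+\infty$ is the direct computation $g^{c}\equiv-\infty$, $g^{cc^{\prime}}\equiv+\infty$. With this identity, (iii) is immediate, its reverse implication being (i) applied to $f^{c}$; and (ii) follows by sandwiching: $f^{cc^{\prime}}$ is e-convex by (i) and minorizes $f$ by (iv), so $f^{cc^{\prime}}\le\eco f$, while $\eco f$ is e-convex and, thanks to the assumed proper e-convex minorant, does not attain $-\infty$, so the identity together with monotonicity of $(\cdot)^{cc^{\prime}}$ gives $\eco f=(\eco f)^{cc^{\prime}}\le f^{cc^{\prime}}$.

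All the $W$-side assertions transfer verbatim through the symmetry $c^{\prime}(w,x)=c(x,w)$ and the dual characterization of e$^{\prime}$-convex functions as suprema of $c^{\prime}$-elementary functions. The main obstacle, and the place demanding genuine care rather than formula-pushing, is the bookkeeping with the extended-real arithmetic under the nonstandard convention giving priority to $-\infty$: one must check that improper cases (functions attaining $-\infty$, or the constants $\pm\infty$) neither corrupt the elementary-supremum representation in (i) nor invalidate the inequalities in the core step, and one must confirm that $\eco f$ and $\epco g$ are well defined as the relevant largest minorants exactly under the stated hypotheses (``$f$ has a proper e-convex minorant'' in (ii), ``$f$ does not attain $-\infty$'' in (iii)), which is precisely what rules out the degenerate identifications.
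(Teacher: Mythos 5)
This theorem is imported verbatim from \cite{ML2005} (Prop.~6.1, 6.2, Cor.~6.1) and the paper contains no proof of it, so there is nothing in-paper to compare against; your proposal is a correct reconstruction along exactly the standard route of the cited source, namely Moreau's generalized conjugation scheme: identifying e-affine functions with the $c$-elementary functions so that Theorem \ref{theorem:charpef} gives the supremum representation, the Young-type inequality for (iv), the biconjugation identity $g=g^{cc^{\prime}}$ for e-convex $g$ not attaining $-\infty$ as the core step, and the sandwich $f^{cc^{\prime}}\leq\eco f=(\eco f)^{cc^{\prime}}\leq f^{cc^{\prime}}$ for (ii). Your handling of the degenerate cases (functions attaining $-\infty$, the constants $\pm\infty$, and the priority-to-$-\infty$ arithmetic, which is precisely what makes $f^{cc^{\prime}}\leq f$ hold unconditionally) is the right bookkeeping and matches what the general theory requires.
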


As a consequence of the previous result, we present the following corollary.

\begin{corollary} \label{cor:Corollaryconseq}
Let $f:X\rightarrow \overline{\mathbb{R}}$ and $g:W\rightarrow \overline{\mathbb{R}}$. Then, $(\eco f)^c=f^c$ and $(\epco g)^{c^{\prime }}=g^{c^{\prime }}$.
\end{corollary}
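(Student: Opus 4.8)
The plan is to derive both identities from the biconjugation facts collected in Theorem~\ref{thm:Theorem_charac}, combined with the order-reversing character of the two conjugation operators. I treat the first equality $(\eco f)^c=f^c$ in detail; the second one, $(\epco g)^{c^\prime}=g^{c^\prime}$, is obtained by the same scheme after interchanging the roles of $X$ and $W$ and of $c$ and $c^\prime$.

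First I would record the monotonicity of the conjugation: straight from the definition of $f^c$ as a pointwise supremum of the quantities $c(x,\cdot)-f(x)$, if $f_1\le f_2$ then $f_1^c\ge f_2^c$, and symmetrically $h_1\le h_2$ forces $h_1^{c^\prime}\ge h_2^{c^\prime}$. The second ingredient is the triconjugate identity $f^{cc^\prime c}=f^c$. This follows at once from Theorem~\ref{thm:Theorem_charac}: part~i) guarantees that $f^c$ is e$^{\prime}$-convex, and parts~iii)--iv) assert that an e$^{\prime}$-convex function not attaining $-\infty$ equals its own $c^\prime c$-biconjugate; applying this to $g:=f^c$ yields $f^c=(f^c)^{c^\prime c}=f^{cc^\prime c}$.

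With these two facts the argument becomes a sandwich. By Theorem~\ref{thm:Theorem_charac}~iv) we have $f^{cc^\prime}\le f$, while part~i) shows that $f^{cc^\prime}=(f^c)^{c^\prime}$ is e-convex; hence $f^{cc^\prime}$ is an e-convex minorant of $f$. Since $\eco f$ is by definition the largest e-convex minorant of $f$, we obtain
\begin{equation*}
	f^{cc^\prime}\le \eco f\le f.
\end{equation*}
Applying the antitone $c$-conjugation reverses these inequalities, giving $f^{cc^\prime c}\ge (\eco f)^c\ge f^c$, and the triconjugate identity $f^{cc^\prime c}=f^c$ collapses the chain into $(\eco f)^c=f^c$. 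The same sandwich, now with $g^{c^\prime c}\le \epco g\le g$ and the dual triconjugate identity $g^{c^\prime cc^\prime}=g^{c^\prime}$ (obtained from the e-convexity of $g^{c^\prime}$ via part~iii)), delivers the second equality.

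The point requiring care, which I expect to be the main obstacle, is the hypothesis ``does not attain $-\infty$'' in Theorem~\ref{thm:Theorem_charac}~iii), since this is precisely what makes the triconjugate identity available, and $\eco f$ is introduced only for proper $f$. I would settle it by disposing of the improper instances separately: if $\dom f=\emptyset$ then $f^c\equiv-\infty$ and $\eco f=f$, whereas if $f$ attains the value $-\infty$ then $f^c\equiv+\infty$ and $\eco f$ also attains $-\infty$, so in either degenerate situation the equality is immediate; in the remaining case $f^c$ does not take $-\infty$, Theorem~\ref{thm:Theorem_charac}~iii) applies, and the sandwich above goes through verbatim.
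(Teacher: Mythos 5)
Your proof is correct and takes essentially the paper's route: the corollary is stated there as a direct consequence of Theorem~\ref{thm:Theorem_charac}, namely the triconjugation identity $f^{cc^{\prime}c}=f^{c}$ (and its dual $g^{c^{\prime}cc^{\prime}}=g^{c^{\prime}}$) obtained from parts i), iii) and iv) together with the antitonicity of the conjugations, which is exactly your sandwich argument $f^{cc^{\prime}}\leq\eco f\leq f$. Your separate treatment of the improper cases (where $f^{c}\equiv-\infty$ or $f^{c}\equiv+\infty$) and your bypassing of part ii)'s ``proper e-convex minorant'' hypothesis are careful refinements of the same argument rather than a different method.
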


In the same natural manner than e-convex functions are connected to e-convex sets, $\ep$-convex sets arose from $\ep$-convex functions. Their main properties were presented in \cite{FV2020} and their role in duality results in evenly convex optimization is fundamental; see \cite{F2015,FVR2016,FV2018,FV2016SSD}.
\begin{definition}[Def.~2, \cite{FVR2012}]
A set $D \subset W \times \R$ is \textit{$\ep$-convex} if there exists an $\ep$-convex function $k : W \to \R$ such that $D = \epi k$. The \textit{$\ep$-convex hull} of an arbitrary set $D \subset W \times \R$ is defined as the smallest
$\ep$-convex set containing $D$ and it will be denoted by $\epco D$.
\end{definition}

Associated to the $c$-conjugation scheme is the notion of $c$-subdifferentiability of a function at a point, which was introduced in \cite{MLVP2011} and connected with evenly convex optimization in \cite{FVR2016}, \cite{FGV2021} and \cite{FV2022}.

\begin{definition}
Let $f:X \rightarrow \R$ and $\varepsilon \geq 0$. A point $(x^*,y^*,\alpha) \in W$ is an \textit{$\varepsilon$-$c$-subgradient} of $f$ at $\bar x \in \dom f$ if $\left\langle  \bar x, u^* \right\rangle < \alpha $ and, for all $x \in X$,
$$ f(x)-f(\bar x) \geq c (x,(x^*,y^*,\alpha))-c(\bar x,(x^*,y^*,\alpha))-\varepsilon. $$ 
In the case $\varepsilon=0$, these points are called $c$-subgradients.
The set of $\varepsilon$-$c$-subgradients ($c$-subgradients, resp.) of $f$ at $\bar x$ is denoted by $\partial_{c,\varepsilon}f(\bar x)$ ($\partial_{c}f(\bar x)$, resp.) and it is called the \textit{$\varepsilon$-$c$-subdifferential} (\textit{$c$-subdifferential}, resp.) of $f$ at $\bar x$. As usual, if $f(\bar x) \notin \R$, it yields $\partial_{c, \varepsilon}f(\bar x)=\emptyset$ and  $\partial_{c}f(\bar x)=\emptyset$.
\end{definition}
There exists a relevant connection between the $c$-subdifferentiability of a function and the $c$-subdifferentiability of its e-convex hull. The following result comes directly from Corollary \ref{cor:Corollaryconseq} and Theorem 4 ii) from \cite{FV2022}.
\begin{lemma} \label{lemma:c-sub_econv}
Let $f:X\rightarrow \Ramp$ be a proper function. Then 
$$\partial_{c,\varepsilon}f(x) \subseteq \partial_{c,\varepsilon}(\eco f)(x) $$
for all $\varepsilon \geq 0$ and $x \in X$.
\end{lemma}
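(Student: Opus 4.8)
The plan is to translate the membership $(x^*,y^*,\alpha)\in\partial_{c,\varepsilon}f(\bar x)$ into a single scalar inequality involving the $c$-conjugate, and then transport that inequality to $\eco f$ using the identity $(\eco f)^c=f^c$. Writing $\bar x$ for the generic point of the statement, the first step is to rearrange the defining inequality of the $\varepsilon$-$c$-subdifferential: for all $x\in X$,
\[ c(x,(x^*,y^*,\alpha))-f(x)\le c(\bar x,(x^*,y^*,\alpha))-f(\bar x)+\varepsilon, \]
and taking the supremum over $x$ this is equivalent to
\[ f(\bar x)+f^c(x^*,y^*,\alpha)\le c(\bar x,(x^*,y^*,\alpha))+\varepsilon. \]
Hence, by Theorem 4 ii) in \cite{FV2022}, $(x^*,y^*,\alpha)\in\partial_{c,\varepsilon}f(\bar x)$ holds precisely when $\ci\bar x,y^*\cd<\alpha$ together with the displayed inequality, and the very same equivalence is available for $\eco f$.

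If $\partial_{c,\varepsilon}f(\bar x)=\emptyset$ the inclusion is trivial, so I would fix an arbitrary triple $(x^*,y^*,\alpha)\in\partial_{c,\varepsilon}f(\bar x)$. Before transporting the inequality I would check that the characterization legitimately applies to $\eco f$ at $\bar x$. The defining inequality exhibits the $c$-elementary function $x\mapsto c(x,(x^*,y^*,\alpha))+\bigl(f(\bar x)-c(\bar x,(x^*,y^*,\alpha))-\varepsilon\bigr)$ as a minorant of $f$, so $f$ possesses a proper e-convex minorant and $\eco f$ is a genuine proper e-convex function; evaluating this minorant at $\bar x$ gives $f(\bar x)-\varepsilon\le(\eco f)(\bar x)\le f(\bar x)$, so that $(\eco f)(\bar x)$ is finite.

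The core of the argument is then a short chain of inequalities. From $\eco f\le f$ we have $(\eco f)(\bar x)\le f(\bar x)$, and Corollary \ref{cor:Corollaryconseq} gives $(\eco f)^c=f^c$; combining these with the conjugate inequality for $f$ yields
\[ (\eco f)(\bar x)+(\eco f)^c(x^*,y^*,\alpha)\le f(\bar x)+f^c(x^*,y^*,\alpha)\le c(\bar x,(x^*,y^*,\alpha))+\varepsilon. \]
Since the triple $(x^*,y^*,\alpha)$ is unchanged, the feasibility relation $\ci\bar x,y^*\cd<\alpha$ still holds, and the characterization applied to $\eco f$ delivers $(x^*,y^*,\alpha)\in\partial_{c,\varepsilon}(\eco f)(\bar x)$, as required.

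I expect the only delicate point --- and hence the main obstacle --- to be the well-definedness and properness of $\eco f$ together with the finiteness of $(\eco f)(\bar x)$, because the conjugate characterization of the $\varepsilon$-$c$-subdifferential presupposes a base point lying in the effective domain with a real value. Once the nonemptiness of $\partial_{c,\varepsilon}f(\bar x)$ is used to manufacture the e-affine minorant above, this obstruction is removed and the remaining steps reduce to the monotonicity of the operator $\eco$ and the conjugate identity of Corollary \ref{cor:Corollaryconseq}.
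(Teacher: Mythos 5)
Your proof is correct and follows essentially the same route as the paper, which obtains the lemma directly from the conjugate-inequality characterization of the $\varepsilon$-$c$-subdifferential (Theorem 4 ii) of \cite{FV2022}) combined with the identity $(\eco f)^c=f^c$ of Corollary \ref{cor:Corollaryconseq}. Your extra verification that $\eco f$ is proper and finite at $\bar{x}$ (via the $c$-elementary minorant manufactured from the subgradient inequality) is a careful fleshing-out of a step the paper leaves implicit.
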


Next lemma will be important in Section \ref{sec:SD_DL_tilde} due to the connection that establishes between the set $\epi f^c$ and the operator $\partial_{c,\varepsilon}f$.
\begin{lemma}[Lem.9,~\cite{FVR2012}]\label{Lema9_9}
Let $f:X\rightarrow \Ramp$ be a proper function. Then, if $x_0 \in \dom f$,
\begin{equation*}
	\epi f^c=\bigcup_{\varepsilon \geq 0} \left \{(x^*, y^*, \alpha, \left\langle x_0, x^*\right\rangle+\varepsilon-f(x_0):(x^*, y^*, \alpha) \in \partial_{c,\varepsilon} f(x_0)\right \}.
\end{equation*}
\end{lemma}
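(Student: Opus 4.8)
The plan is to prove the two set inclusions by reducing \emph{both} membership in $\epi f^c$ and the $\varepsilon$-$c$-subgradient condition to one and the same inequality. Note first that since $x_0\in\dom f$ and $f$ is proper, $f(x_0)\in\R$, so the candidate last coordinate $\ci x_0,x^*\cd+\varepsilon-f(x_0)$ is always a real number. The key observation is that, whenever $\ci x_0,y^*\cd<\alpha$ (equivalently $x_0\in H_{y^*,\alpha}^{-}$), one has $c(x_0,(x^*,y^*,\alpha))=\ci x_0,x^*\cd$, and the defining inequality of $\partial_{c,\varepsilon}f(x_0)$, namely $f(x)-f(x_0)\ge c(x,(x^*,y^*,\alpha))-\ci x_0,x^*\cd-\varepsilon$ for all $x$, can be rewritten as $c(x,(x^*,y^*,\alpha))-f(x)\le \ci x_0,x^*\cd-f(x_0)+\varepsilon$ for all $x$. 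Taking the supremum over $x\in X$, this is precisely $f^c(x^*,y^*,\alpha)\le \ci x_0,x^*\cd+\varepsilon-f(x_0)$, i.e. the assertion that $(x^*,y^*,\alpha,\ci x_0,x^*\cd+\varepsilon-f(x_0))\in\epi f^c$.

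For the inclusion $\supseteq$, I would take $\varepsilon\ge 0$ and $(x^*,y^*,\alpha)\in\partial_{c,\varepsilon}f(x_0)$. By definition $\ci x_0,y^*\cd<\alpha$, so the reformulation above applies verbatim and yields $f^c(x^*,y^*,\alpha)\le \ci x_0,x^*\cd+\varepsilon-f(x_0)$; hence the displayed point lies in $\epi f^c$.

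For the inclusion $\subseteq$, I would start from a point $(x^*,y^*,\alpha,\lambda)\in\epi f^c$ and set $\varepsilon:=\lambda-\ci x_0,x^*\cd+f(x_0)$, so that $\lambda=\ci x_0,x^*\cd+\varepsilon-f(x_0)$ by construction. It remains to check that $\varepsilon\ge0$ and that $(x^*,y^*,\alpha)\in\partial_{c,\varepsilon}f(x_0)$. Since $f^c(x^*,y^*,\alpha)\le\lambda<+\infty$, the representation $f^c(x^*,y^*,\alpha)=f^*(x^*)$ if $\dom f\subseteq H_{y^*,\alpha}^{-}$ and $+\infty$ otherwise forces $\dom f\subseteq H_{y^*,\alpha}^{-}$; in particular $\ci x_0,y^*\cd<\alpha$. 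Evaluating the supremum defining $f^c$ at the point $x_0$ then gives $\ci x_0,x^*\cd-f(x_0)\le f^c(x^*,y^*,\alpha)\le\lambda$, whence $\varepsilon\ge0$. Finally, since $f^c(x^*,y^*,\alpha)\le\lambda=\ci x_0,x^*\cd+\varepsilon-f(x_0)$, reading the reformulation of the first paragraph backwards delivers the subgradient inequality for every $x\in X$, so $(x^*,y^*,\alpha)\in\partial_{c,\varepsilon}f(x_0)$.

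The only real subtlety, and the step I expect to need the most care, is extracting the admissibility condition $\ci x_0,y^*\cd<\alpha$ in the $\subseteq$ direction: the $\varepsilon$-$c$-subgradient notion is only meaningful when $x_0\in H_{y^*,\alpha}^{-}$, and this must be deduced---rather than assumed---from the finiteness of $f^c$ at $(x^*,y^*,\alpha)$, which is exactly where the explicit formula for $f^c$ in terms of $f^*$ is used. Everything else is a direct, reversible manipulation of the defining inequalities.
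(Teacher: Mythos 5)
Your proposal is correct: the pointwise equivalence between the $\varepsilon$-$c$-subgradient inequality and $f^c(x^*,y^*,\alpha)\leq\ci x_0,x^*\cd+\varepsilon-f(x_0)$ is exact (even in the degenerate cases where $c(x,(x^*,y^*,\alpha))$ or $f(x)$ equals $+\infty$, given the sign convention), and you rightly isolate the one nontrivial step, deducing $\ci x_0,y^*\cd<\alpha$ in the $\subseteq$ direction from the finiteness of $f^c$ via its representation through $f^*$ and $\dom f\subseteq H^{-}_{y^*,\alpha}$. The paper itself imports this lemma from \cite{FVR2012} without reproducing a proof, and your argument is the standard one underlying that reference, so there is nothing to flag.
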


To conclude this section, we recall the following definition.

\begin{definition}
\label{def:Inf_conv}
Let $f_1,f_2: X \rightarrow \Ramp$ be proper functions. The \textit{infimal convolution function} of $f_1$ and $f_2$, $(f_1 \oplus f_2):X \to \Ramp$, is defined as
\begin{equation*}
	(f_1 \oplus f_2)(x)= \inf_{u\in X} \{ f_1(u)+f_2(x-u)\}.
\end{equation*}
Additionally, the infimal convolution is said to be \textit{exact at $x\in X$} when the equality $(f_1\oplus f_2)(x)=f_1(a)+f_2(x-a)$ holds for some $a\in X$.
\end{definition}

\section{Dual problems}
\label{sec:Dual_problems}
Let us consider a DC optimization problem
\begin{equation}
	\label{eq:Primal_problem}
	\tag{$P$}
	\inf_{x\in A} \left\{ f(x)-g(x) \right\},
	\end{equation}
where its feasible set is defined as
\begin{equation}
	\label{eq:Set_A}
	A=\left\{x\in X\,:\, h_t(x)\leq 0, ~t\in T\right\},
\end{equation}
being $f,g, h_t:X\to\Ramp$ proper convex functions, for all $t\in T$, with $T$ an arbitrary index set. In case $g\equiv 0$, we replace the notation of the primal problem \eqref{eq:Primal_problem} by $(P_0)$. We assume that $f(x)-g(x)=+\infty$ in case $x \not \in \dom f$, which implies that $f-g$ will be a proper function if and only if $\dom f \subseteq \dom g$, and, consequently, for all $(u^*, v^*, \gamma ) \in \dom g^c$, $\dom f \subseteq H_{v^*, \gamma}^-$.

Denoting by $\RTD$ the space of generalized finite sequences, i.e., a sequence $\lambda=(\lambda_t)_{t\in T}$ belongs to $\RTD$ if only finitely many $\lambda_t$ are different from zero, and representing by ${\RTD_+}$ its nonnegative polar cone, the product $\lambda h := \sum_{t \in T} \lambda_t h_t$, with $\lambda \in \RTD_+$ is well defined and used as duality product between these spaces. Associated to \eqref{eq:Primal_problem}, we recall the \textit{standard} Lagrange dual problem
\begin{equation}
	\label{eq:DL_standard}
	\tag{$D_L$}
	\sup_{\lambda\in\RTD_+}\inf_{x\in X} \left\{f(x)-g(x)+\lambda h(x)\right\},
\end{equation}
which was also obtained via perturbational approach and using $c$-conjugation instead of Fenchel conjugate pattern in \cite{FVR2016}, where it was deeply studied in case $g\equiv 0$ and sufficient conditions for strong  duality were obtained. Complementing this research, but from a completely different perspective, we characterized  in \cite{FV2024} not only strong duality, but also zero duality gap, when the primal is a general DC optimization problem, conditions which, of course, can be particularized for  $g\equiv 0$.\\ 
In case $g$ happens to be e-convex and as shown in \cite{FV2024}, the dual problem \eqref{eq:DL_standard} can be rewritten as 
\begin{equation}
	\label{eq:DL_bar}
	\tag{$\overline{D}_L$}
	\sup_{\lambda\in\RTD_+}\inf_{(x^*,y^*,\alpha)\in W} \left\{g^c(x^*,y^*,\alpha)-(f+\lambda h)^c(x^*,y^*,\alpha)\right\},
\end{equation}
but, in general, there is no relation between the optimal values of both Lagrange dual problems. However, if $f$ and $\lambda h$ satisfy condition (AC) for any $\lambda\in\RTD_+$, in virtue of \cite[Cor.~8]{FVR2012} and Corollary \ref
{cor:Corollaryconseq},  the function $f+\lambda h$ satisfies  

	$$(f+\lambda h)^c =\left(\eco (f+\lambda h)\right)^c =
	\epco (f^c \oplus (\lambda h)^c)\leq f^c \oplus (\lambda h)^c,$$
which, together with the structure of \eqref{eq:DL_bar}, allows us to suggest a new dual problem for $(P)$
\begin{equation}
	\label{eq:DL_tilde}
	\tag{$\widetilde{D}_L$}	
	\sup_{\lambda\in\RTD_+}\inf_{(x^*,y^*,\alpha)\in W} \left\{g^c(x^*,y^*,\alpha)-(f^c \oplus (\lambda h)^c)(x^*,y^*,\alpha)\right\}.
\end{equation}	
Then, if $f$ and $\lambda h$ satisfy condition (AC), for all $\lambda\in\RTD_+$, and $g$ is e-convex, we obtain the optimal values relation
\begin{equation}
	\label{eq:Relation_initial}
	v(P) \geq v\eqref{eq:DL_standard}=v\eqref{eq:DL_bar}\geq v\eqref{eq:DL_tilde}.
\end{equation}
The following example taken from \cite{FV2024} justifies that the e-convexity of $g$ is necessary for the validity of \eqref{eq:Relation_initial}.\\
\begin{example}
Let $T$ be a singleton and $f,g,h:\R\to\Ramp$ functions defined as
\begin{equation*}
	f(x)=\left\{
	\begin{array}{ll}
		x,&~~~\text{if } x\geq 0,\\
		+\infty, &~~~ \text{otherwise;}
	\end{array}
	\right.
	\hspace{0.35cm}
	g(x)=\left\{
	\begin{array}{ll}
		x,&~~~\text{if } x> 0,\\
		1,&~~~\text{if } x=0,\\
		+\infty, &~~~ \text{otherwise;}
	\end{array}
	\right.
	\hspace{0.35cm}
	h(x)=\left\{
	\begin{array}{ll}
		-x,&~~~\text{if } x\geq 0,\\
		+\infty, &~~~ \text{otherwise.}
	\end{array}
	\right.
\end{equation*}
We have $A=\left\lbrace x \in \R: h(x) \leq 0 \right\rbrace= \left[0, +\infty \right[$ and the optimal value of \eqref{eq:Primal_problem} is
\begin{equation*}
	v\eqref{eq:Primal_problem}=\inf\left\lbrace f(x)-g(x):x \geq 0 \right\rbrace=-1,
\end{equation*} 
and, for all $(x^*,y^*,\alpha) \in \R^3$, one obtains
\begin{align*}
g^c(x^*,y^*,\alpha) &=\left\{
	\begin{array}{ll}
		0,&~~~\text{if } x^* \leq 1, y^* \leq 0, \alpha >0,\\
		+\infty, &~~~ \text{otherwise.}
	\end{array}
	\right.
\end{align*}
According to the definition of $f$ and $h$, given $\lambda\in\R_+$, it follows
\begin{align*}
	f^c(x^*,y^*,\alpha)&=\left\{
	\begin{array}{ll}
		\sup_{x\geq 0}\left\{x(x^*-1)\right\}, &~~~ \text{if }y^*\leq 0,\,\alpha>0,\\
		+\infty, &~~~ \text{otherwise},
	\end{array}
	\right.\\
	&=\left\{
	\begin{array}{ll}
		0,&~~~ \text{if } x^*\leq 1,\,y^*\leq 0,\,\alpha>0,\\
		+\infty,&~~~ \text{otherwise},
	\end{array}
	\right.
\end{align*}
together with
\begin{align*}
	(\lambda h)^c(x^*,y^*,\alpha)&=\left\{
	\begin{array}{ll}
		\sup_{x\geq 0}\left\{x(x^*+\lambda)\right\}, &~~~ \text{if }y^*\leq 0,\,\alpha>0,\\
		+\infty, &~~~ \text{otherwise},
	\end{array}
	\right.\\
	&=\left\{
	\begin{array}{ll}
		0,&~~~ \text{if } x^*\leq -\lambda,\,y^*\leq 0,\,\alpha>0,\\
		+\infty,&~~~ \text{otherwise}.
	\end{array}
	\right.
\end{align*}
Then, for all $\lambda\geq 0$, 
\begin{align*}
(f^c\oplus(\lambda h)^c)(x^*,y^*,\alpha) &= 
	 \inf_{(u^*,v^*,\gamma)\in W}\left\{ f^c(u^*,v^*,\gamma)+(\lambda h)^c(x^*-u^*,y^*-v^*,\alpha-\gamma)\right\}\\[0.15cm]
&=\left\{
	\begin{array}{ll}
		0, &~~~ \text{if } x^*+\lambda \leq 1, y^*\leq 0,\,\alpha>0,\\
		+\infty, &~~~ \text{otherwise}.
	\end{array}
	\right.
\end{align*}
\noindent
Finally, we have
\begin{align*}
 \inf_{(x^*,y^*,\alpha)\in W} \left\{g^c(x^*,y^*,\alpha)-(f^c \oplus (\lambda h)^c)(x^*,y^*,\alpha)\right\}
= \left\{
	\begin{array}{ll}
		0, &~~~ \text{if } \lambda=0,\\
		-\infty, &~~~ \text{otherwise},
	\end{array}
	\right.
\end{align*}
\noindent
concluding that 
\begin{equation*}
	v\eqref{eq:DL_tilde}=\sup_{\lambda\geq 0}\inf_{(x^*,y^*,\alpha)\in W} \left\{g^c(x^*,y^*,\alpha)-(f^c \oplus (\lambda h)^c)(x^*,y^*,\alpha)\right\}=0,
\end{equation*}
so neither weak duality for \eqref{eq:Primal_problem}$-$\eqref{eq:DL_tilde} holds nor \eqref{eq:Relation_initial} remains true. \qed
\end{example}

In the light of this example and as it happened previously in \cite{FV2023} for Fenchel duality and in \cite{FV2024} for Lagrange duality in DC optimization, it is clear that not only sufficient conditions guaranteeing strong duality for the pair \eqref{eq:Primal_problem}$-$\eqref{eq:DL_tilde} are needed, but also conditions ensuring weak duality for this dual pair must be developed. 

Though the main objective of the rest of the paper is to study the primal-dual pair \eqref{eq:Primal_problem}$-$\eqref{eq:DL_tilde}, we will start this research approaching conditions for strong duality between \eqref{eq:Primal_problem}$-$\eqref{eq:DL_standard} since some results from the next section will be needed later on in the analysis of \eqref{eq:Primal_problem}$-$\eqref{eq:DL_tilde}.

\section{New duality results for $(P)-(D_L)$}
\label{sec:SD_DL_standard}
In this section we start revisiting a regularity condition for strong duality between $(P_0)-$\eqref{eq:DL_standard} developed in \cite{FVR2016}. This condition is related to the equality
\begin{equation}
	\label{eq:Prop4_1_FVR2015}
	\epi \delta_A^c = \epco \Bigg(\bigcup_{\lambda\in\RTD_+} \epi(\lambda h)^c\Bigg),
\end{equation}
which, due to \cite[Prop.~4.1]{FVR2016}, allows to show that the condition
\begin{equation}
\label{eq:ECCQ}
	\tag{$\ECCQ$}
	\epi \delta_A^c=\bigcup_{\lambda\in\RTD_+} \epi(\lambda h)^c 
\end{equation} 
which actually means that the set $\bigcup_{\lambda\in\RTD_+} \epi(\lambda h)^c$ is $\ep$-convex, together with additional properties, see \cite[Th.~4.1]{FVR2016}, become sufficient for strong duality for $(P_0)-({D}_L)$. As explained there, this condition turns out to be the e-convex counterpart of the \textit{closed
cone constraint qualification} (CCCQ) for strong Lagrange duality in \cite{JDL2004}, and the functions $f$ and $h_t$, for all $t \in T$ are assumed to be all e-convex. Next example shows that the e-convexity of the constraints is necessary for  \eqref{eq:Prop4_1_FVR2015}.

\begin{example}
\label{ex:Constraints_need_to_be_econvex}
Let $T$ be a singleton and $h:\R\to\Ramp$ be the function defined by
\begin{equation*}
	h(x)=\left\{
	\begin{array}{ll}
		x,&~~~ \text{if } x<0,\\
		1,&~~~ \text{if } x=0,\\
		+\infty,&~~~ \text{if } x>0.
	\end{array}
	\right.
\end{equation*}
It is clear that $A=]-\infty,0[$. Applying the definition of $c$-conjugate function, it is not complicated to see that 
\begin{equation*}
	\epi \delta_A^c=\R_+\times \big((\R_+\times\R_+)\backslash\left\{(0,0)\right\}\big)\times \R_+.
\end{equation*}
Now, for all $\lambda\geq 0$, it follows that 
\begin{equation*}
	\epi(\lambda h)^c =  [\lambda,+\infty[\times\big(\R_+\times\R_{++}\big)\times \R_+,
\end{equation*}
so we have
\begin{equation*}
	\bigcup_{\lambda\in\RTD_+}\epi(\lambda h)^c=\R_+\times\big(\R_+\times\R_{++}\big)\times\R_+.
\end{equation*}
Finally, we conclude that 
\begin{equation*}
	\epco \Bigg( \bigcup_{\lambda\in\RTD_+}\epi(\lambda h)^c \Bigg)\subsetneqq \epi\delta_A^c, 
\end{equation*}
which shows that the e-convexity of $h$ is necessary for the fulfillment of \eqref{eq:Prop4_1_FVR2015}.\qed
\end{example}
This example suggests to ensure the accomplishment of condition (ECCQ) without the e-convexity of the constraint functions determining the feasible set $A$. To this aim, we define the following sets
\begin{equation}
\label{eq:Set_B_new}
\begin{array}{ll}
	\A_\lambda &:= \left\{ x\in X\,:\,(\eco \lambda h)(x)\leq 0\right\}, \text{ for } \lambda\in\RTD_+;\\
	\B&:=\bigcap_{\lambda\in\RTD_+} \A_\lambda.
\end{array}
\end{equation}

\begin{proposition}
\label{prop:Chain_Alamdbda_B}
$\epi\delta_\B^c=\epco\Big(\bigcup_{ \lambda\in\RTD_+}\epi(\lambda h)^c\Big).$

\end{proposition}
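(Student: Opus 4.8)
The plan is to deduce the set identity from a single pointwise identity between extended-real-valued functions, namely
\[
	\delta_\B = \sup_{\lambda\in\RTD_+}\eco(\lambda h),
\]
and then to transport this equality through the $c$- and $c^{\prime}$-conjugation calculus of Theorem \ref{thm:Theorem_charac} and Corollary \ref{cor:Corollaryconseq}. The reason for defining $\B$ through the e-convex hulls $\eco(\lambda h)$ rather than through the raw constraints is precisely that this supremum identity holds \emph{without} any e-convexity assumption on the data $h_t$; Example \ref{ex:Constraints_need_to_be_econvex} shows that such an assumption would be unavoidable if one worked with $A$ itself.

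First I would prove the supremum identity. On $\B$ one has $\eco(\lambda h)(x)\le 0$ for every $\lambda$, while the choice $\lambda=0$ (so that $\eco(\lambda h)=\eco(0)=0$) forces the supremum to equal exactly $0=\delta_\B(x)$. For $x\notin\B$ there is $\lambda_0\in\RTD_+$ with $\eco(\lambda_0 h)(x)>0$; the key observation is that $\eco$ commutes with multiplication by positive scalars, i.e.\ $\eco(s\,\lambda_0 h)=s\,\eco(\lambda_0 h)$ for $s>0$. This follows because the e-affine minorants of $s\,\lambda_0 h$ are exactly the $s$-multiples of those of $\lambda_0 h$ (Definition \ref{definition:eaffine}) and $\eco$ is the supremum of these minorants (Theorem \ref{theorem:charpef}). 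Since $s\lambda_0\in\RTD_+$ for every $s>0$, letting $s\to+\infty$ gives $\sup_\lambda\eco(\lambda h)(x)=+\infty=\delta_\B(x)$, establishing the identity everywhere.

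Next comes the conjugation bookkeeping. Set $g:=\inf_{\lambda\in\RTD_+}(\lambda h)^c$ on $W$. Exchanging the two suprema yields $g^{c^{\prime}}=\sup_{\lambda}\big((\lambda h)^c\big)^{c^{\prime}}=\sup_{\lambda}(\lambda h)^{cc^{\prime}}=\sup_{\lambda}\eco(\lambda h)$ by Theorem \ref{thm:Theorem_charac} ii), and the previous step identifies this with $\delta_\B$. Taking $c$-conjugates and invoking Theorem \ref{thm:Theorem_charac} ii) once more gives $\delta_\B^c=g^{c^{\prime}c}=\epco g$, so that $\epi\delta_\B^c=\epi(\epco g)=\epco(\epi g)$, the last equality holding because $\ep$-convex sets are by definition the epigraphs of $\ep$-convex functions, whence the smallest $\ep$-convex set containing $\epi g$ is the epigraph of the largest $\ep$-convex minorant of $g$. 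Finally, since $(\lambda h)^c=(\eco(\lambda h))^c$ by Corollary \ref{cor:Corollaryconseq}, we have $\bigcup_{\lambda}\epi(\lambda h)^c\subseteq\epi g$, and applying $\epco$ yields $\epco\big(\bigcup_\lambda\epi(\lambda h)^c\big)\subseteq\epco(\epi g)=\epi\delta_\B^c$, which already settles one inclusion.

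The main obstacle is the reverse inclusion $\epi\delta_\B^c=\epco(\epi g)\subseteq\epco\big(\bigcup_\lambda\epi(\lambda h)^c\big)$, since $g$ is an infimum and a point $(w,r)\in\epi g$ at which the infimum is not attained need not belong to $\bigcup_\lambda\epi(\lambda h)^c$. I would resolve this by using that $\epco\big(\bigcup_\lambda\epi(\lambda h)^c\big)=\epi k$ for an $\ep$-convex $k$, hence is closed from above in the last coordinate: for each $\varepsilon>0$ there is $\lambda$ with $(w,r+\varepsilon)\in\epi(\lambda h)^c\subseteq\epi k$, so $k(w)\le r+\varepsilon$, and letting $\varepsilon\downarrow 0$ gives $(w,r)\in\epi k$. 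This shows $\epi g\subseteq\epco\big(\bigcup_\lambda\epi(\lambda h)^c\big)$, and taking $\epco$ closes the argument. The only further care required is in the two applications of Theorem \ref{thm:Theorem_charac} ii), which presuppose that each $\lambda h$ and the set $\B$ admit a proper e-convex minorant; this holds under the standing properness hypotheses, and the degenerate cases (empty $\B$, or $\lambda h$ without a proper e-convex minorant) are checked directly. Combining the two inclusions gives the stated equality.
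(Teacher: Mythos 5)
Your proof is correct, but it reaches the hard inclusion by a genuinely different route than the paper. The easy inclusion $\epco\big(\bigcup_{\lambda}\epi(\lambda h)^c\big)\subseteq\epi\delta_\B^c$ is essentially the paper's (monotonicity of conjugation plus Corollary \ref{cor:Corollaryconseq}); for the converse, the paper stays entirely at the set level: it uses $\epi\delta_D^c=\bigcap_{x\in D}\epi c(x,\cdot)$ and imports from the proof of \cite[Prop.~4.1]{FVR2016} (formulae (7)--(11) there) the representation $\epco K=\bigcap_{x\in X_1}\epi c(x,\cdot)$, with $X_1$ the projection onto $X$ of the index set of the $c$-elementary functions generating the hull, and then checks $X_1\subseteq\B$ via $(\lambda h)^{cc^{\prime}}(\bar x)\leq 0$ and Theorem \ref{thm:Theorem_charac}~ii). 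You instead work at the function level: the pointwise identity $\delta_\B=\sup_{\lambda\in\RTD_+}\eco(\lambda h)$ — which, unlike anything in the paper's argument, exploits the cone structure of $\RTD_+$ through positive homogeneity of $\eco$; note that only the inequality $\eco(s\lambda_0 h)\geq s\,\eco(\lambda_0 h)$ is needed, and it follows directly because $s\,\eco(\lambda_0 h)$ is an e-convex minorant of $s\lambda_0 h$, which sidesteps your slightly delicate appeal to Theorem \ref{theorem:charpef} (valid verbatim only for proper hulls) — then conjugation calculus gives $\delta_\B^c=\epco g$ for $g:=\inf_{\lambda}(\lambda h)^c$, and your two supporting observations are exactly right: $\epco(\epi g)=\epi(\epco g)$ holds precisely because $\ep$-convex sets are by definition epigraphs (in contrast with the e-convex hull of functions, where the paper warns the analogue fails), and the $\varepsilon$-argument handling non-attainment of the infimum is just the tautology that $k(w)\leq r+\varepsilon$ for all $\varepsilon>0$ forces $(w,r)\in\epi k$. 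What each approach buys: yours is self-contained, avoids the structural machinery of \cite{FVR2016}, and yields the illuminating intermediate identities $\delta_\B=\sup_{\lambda}\eco(\lambda h)$ and $\delta_\B^c=\epco\big(\inf_{\lambda}(\lambda h)^c\big)$, explaining why $\B$ is the right set; the paper's set-level proof never needs the scaling property of the multiplier cone and dodges most of the $\pm\infty$/improper-hull bookkeeping that your repeated uses of Theorem \ref{thm:Theorem_charac}~ii) entail — caveats you rightly flag, and which, to be fair, are equally implicit in the paper's own single application of that theorem.
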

\begin{proof}
It is clear that $\eco\lambda h(x)\leq \delta_\B(x)$, for all $x\in X$, and we have
\begin{equation*}
	\epi(\eco\lambda h)^c\subseteq \epi\delta_\B^c, \text{ for all } \lambda\in\RTD_+.
\end{equation*}
Taking into account that $(\eco \lambda h)^c=(\lambda h)^c$, for all $\lambda\in\RTD_+$, according to Corollary \ref{cor:Corollaryconseq}, we have
\begin{equation*}
	\bigcup_{\lambda\in \RTD_+}\epi(\lambda h)^c\subseteq \epi\delta_\B^c. 
\end{equation*}
Taking $\ep$-convex hulls, and knowing that $\epi\delta_\B^c$ is $\ep$-convex, we obtain 
$$\epco\Big(\bigcup_{ \lambda\in\RTD_+}\epi(\lambda h)^c\Big)\subseteq \epi\delta_\B^c .$$\\
Now, it is easy to check that, if $D \subseteq X$, $D \neq \emptyset$, then
\begin{equation}\label{epi_cconj}
\epi \delta_D^c=\bigcap_{x \in D} \epi c(x, \cdot). 
\end{equation}
We define the set
\begin{equation*}
	K:=\bigcup_{\lambda\in\RTD_+}\epi(\lambda h)^c,
\end{equation*}
and let $H:=\sup_{(x,\gamma)\in S} \left\lbrace c(x,\cdot)-\gamma \right\rbrace:W \rightarrow \Ramp, S\subseteq X \times \R$, the $\ep$-convex function such that $\epco K=\epi H$. 
Following the same lines than in the proof of Proposition 4.1 in \cite{FVR2016}, more precisely formulae $(7)-(11)$, it can be shown that
$$\epco K=\bigcap_{x \in X_1} \epi c(x,\cdot),$$
where $X_1$ is the projection of the set $S$ onto $X$. Taking into account \eqref{epi_cconj}, if we see that $X_1 \subseteq \B$, we will have $\epi\delta_\B^c \subseteq \epco K$.\\
Take a point $\bar x \in X_1$ and $\lambda \in \RTD_+$. Since $K \subseteq \bigcap_{x \in X_1} \epi c(x,\cdot)$, it holds
$$\epi (\lambda h)^c \subseteq \epi c(\bar x, \cdot),$$
and
$$c(\bar x, (x^*, y^*, \alpha))-( \lambda h)^c (x^*, y^*, \alpha) \leq 0,$$
for all $(x^*, y^*, \alpha) \in \dom (\lambda h)^c$, then
$$(\lambda h)^{cc'} (\bar x) \leq 0$$ meaning that $(\eco \lambda h)(\bar x) \leq 0$, according to Theorem \ref{thm:Theorem_charac} $ii)$. We conclude that $ \bar x \in \A_\lambda$ and $ X_1 \subseteq \B$.\\
\qed
\end{proof}
\begin{remark}
It is important to make the following observation. As Example \ref{ex:Constraints_need_to_be_econvex} shows, the equality \eqref{eq:Prop4_1_FVR2015} holds provided the constraint functions are e-convex. However, and despite the structure of Proposition \ref{prop:Chain_Alamdbda_B} resembles the same appearance than equation  \eqref{eq:Prop4_1_FVR2015}, that proposition holds independently of the even convexity of the constraints that define the set $A$. Hence, as a final comment in that regard, it is convenient to mention that in case the constraints happen to be e-convex, then
\begin{equation*}
	\epi\delta_A^c=\epi\delta_\B^c
\end{equation*}
and Proposition \ref{prop:Chain_Alamdbda_B} boils down to equation \eqref{eq:Prop4_1_FVR2015}.
\end{remark}

In the light of Proposition \ref{prop:Chain_Alamdbda_B}, we give a new condition in the spirit of (ECCQ), where we allow the constraint functions $h_t, \,t \in T,$ to not be necessarily e-convex.
\begin{definition}
\label{def:ECCQII}
The constraint system $\sigma:=\{  h_t(x)\leq 0, ~t\in T \}$ verifies condition (ECCQII) if $\bigcup_{ \lambda\in\RTD_+}\epi(\lambda h)^c$ is $\ep$-convex.
\end{definition}
\begin{example}
\label{ex:Set_B_after_ECCQII}
Revisiting Example \ref{ex:Constraints_need_to_be_econvex}, we can see that condition (ECCQII) holds. Taking into account that
\begin{equation*}
	\eco \lambda h(x)=\left\{
	\begin{array}{ll}
		\lambda x,&~~~ \text{if }x \leq 0,\\
		+\infty,&~~~ \text{if }x>0,
	\end{array}
	\right.
\end{equation*}
it is clear that $\B=]-\infty,0]$ and $\epi\delta_\B^c=\R_+\times\big(\R_+\times\R_{++}\big)\times\R_+$, so $\epi\delta_A^c\subsetneqq\epi\delta_\B^c$ and, additionally, $A\subsetneqq \B$.\qed
\end{example}

\indent
We state the next result which ensures strong duality for the dual pair \eqref{eq:Primal_problem}$-$\eqref{eq:DL_standard}; see again \cite[Th.~4.1]{FVR2016} for the primal-dual pair $(P_0)-(D_L)$ with e-convex constraints. 
\begin{proposition}
\label{prop:ECCCQ_SD}
If $\sigma$ verifies  \textup{(ECCQII)}, $\mathcal{E}_{f-g}\neq \emptyset$, $\eco(f-g+\delta_\B)=\sup\{a\,:\, a \in \mathcal{\tilde E}_{f-g,\delta_\B}\}$, $\epi(f-g)^c+\epi \delta_\B^c$ is $\ep$-convex and $v(P)=\inf_\B f(x)-g(x)$, then strong duality holds for $(P)-(D_L)$.
\end{proposition}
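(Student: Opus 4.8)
The plan is to establish the two ingredients of strong duality for $(P)-$\eqref{eq:DL_standard}, namely the equality $v(P)=v\eqref{eq:DL_standard}$ and the solvability of \eqref{eq:DL_standard}, at once, by producing a dual feasible multiplier $\bar\lambda\in\RTD_+$ whose inner infimum is at least $v(P)$. Weak duality $v\eqref{eq:DL_standard}\le v(P)$ is automatic: for $x\in A$ and $\lambda\in\RTD_+$ one has $\lambda h(x)\le 0$, hence $f(x)-g(x)\ge f(x)-g(x)+\lambda h(x)\ge\inf_{x'\in X}\{f(x')-g(x')+\lambda h(x')\}$, and taking the infimum over $x\in A$ and the supremum over $\lambda$ gives the inequality. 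Thus it suffices to find $\bar\lambda$ with $\inf_{x\in X}\{f(x)-g(x)+\bar\lambda h(x)\}\ge v(P)$, and I may assume $v(P)\in\R$ (if $v(P)=-\infty$, weak duality forces $v\eqref{eq:DL_standard}=-\infty$ and every $\lambda$ is then optimal). Writing $\phi:=f-g$ and noting that $c(x,(0,0,\alpha))=0$ for all $x\in X$ whenever $\alpha>0$, the hypothesis $v(P)=\inf_\B\{f-g\}=\inf_{x\in X}\{\phi(x)+\delta_\B(x)\}$ yields $(\phi+\delta_\B)^c(0,0,\alpha)=-v(P)$, so that $(0,0,\alpha,-v(P))\in\epi(\phi+\delta_\B)^c$ for any fixed $\alpha>0$.

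Next I would rewrite $\epi(\phi+\delta_\B)^c$ by means of the remaining hypotheses. The assumption $\mathcal{E}_{f-g}\neq\emptyset$ guarantees that $\phi$ has a proper e-convex minorant, so $\phi^c$ is proper and $\phi^c\oplus\delta_\B^c$ is well defined. The additivity hypothesis $\eco(\phi+\delta_\B)=\sup\{a:a\in\widetilde{\mathcal{E}}_{f-g,\delta_\B}\}$, together with \cite[Cor.~8]{FVR2012} and Corollary \ref{cor:Corollaryconseq}, gives $(\phi+\delta_\B)^c=\big(\eco(\phi+\delta_\B)\big)^c=\epco(\phi^c\oplus\delta_\B^c)$, exactly as in the chain preceding \eqref{eq:DL_tilde}. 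Since $\epi\phi^c+\epi\delta_\B^c$ is $\ep$-convex, the infimal convolution $\phi^c\oplus\delta_\B^c$ is $\ep$-convex with $\epi(\phi^c\oplus\delta_\B^c)=\epi\phi^c+\epi\delta_\B^c$, whence $\epco(\phi^c\oplus\delta_\B^c)=\phi^c\oplus\delta_\B^c$ and $\epi(\phi+\delta_\B)^c=\epi\phi^c+\epi\delta_\B^c$. Finally, (ECCQII) makes $\bigcup_{\lambda\in\RTD_+}\epi(\lambda h)^c$ $\ep$-convex, so Proposition \ref{prop:Chain_Alamdbda_B} gives $\epi\delta_\B^c=\bigcup_{\lambda\in\RTD_+}\epi(\lambda h)^c$. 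Combining, $\epi(\phi+\delta_\B)^c=\bigcup_{\lambda\in\RTD_+}\big(\epi\phi^c+\epi(\lambda h)^c\big)$; hence the point $(0,0,\alpha,-v(P))$ can be written as $(u^*,v^*,\gamma,s)+(-u^*,-v^*,\alpha-\gamma,-v(P)-s)$ with $\phi^c(u^*,v^*,\gamma)\le s$ and $(\bar\lambda h)^c(-u^*,-v^*,\alpha-\gamma)\le -v(P)-s$ for some $\bar\lambda\in\RTD_+$, so in particular $\phi^c(u^*,v^*,\gamma)+(\bar\lambda h)^c(-u^*,-v^*,\alpha-\gamma)\le -v(P)$.

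It then remains to prove $\inf_{x\in X}\{\phi(x)+\bar\lambda h(x)\}\ge v(P)$. Only $x\in\dom\phi\cap\dom(\bar\lambda h)$ matter, the sum being $+\infty$ elsewhere, and on $\dom\phi=\dom f$ the value $\phi(x)$ is finite. For such $x$, finiteness of $\phi^c(u^*,v^*,\gamma)$ forces $\dom\phi\subseteq H_{v^*,\gamma}^-$, i.e. $\ci x,v^*\cd<\gamma$, while finiteness of $(\bar\lambda h)^c(-u^*,-v^*,\alpha-\gamma)$ forces $\dom(\bar\lambda h)\subseteq H_{-v^*,\alpha-\gamma}^-$, i.e. $\ci x,v^*\cd>\gamma-\alpha$. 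Consequently $c(x,(u^*,v^*,\gamma))=\ci x,u^*\cd$ and $c(x,(-u^*,-v^*,\alpha-\gamma))=-\ci x,u^*\cd$; adding the two $c$-conjugate inequalities $\phi(x)\ge c(x,(u^*,v^*,\gamma))-\phi^c(u^*,v^*,\gamma)$ and $\bar\lambda h(x)\ge c(x,(-u^*,-v^*,\alpha-\gamma))-(\bar\lambda h)^c(-u^*,-v^*,\alpha-\gamma)$, the linear terms cancel and we obtain $\phi(x)+\bar\lambda h(x)\ge -\big[\phi^c(u^*,v^*,\gamma)+(\bar\lambda h)^c(-u^*,-v^*,\alpha-\gamma)\big]\ge v(P)$. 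Therefore $v\eqref{eq:DL_standard}\ge\inf_{x\in X}\{\phi(x)+\bar\lambda h(x)\}\ge v(P)$, which combined with weak duality yields $v\eqref{eq:DL_standard}=v(P)$ with the supremum attained at $\bar\lambda$.

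The step I expect to be the main obstacle is the bookkeeping of the second and third coordinates $(y^*,\alpha)$ of the $c$-conjugation, which have no analogue in ordinary Fenchel duality where $c(\cdot,0)\equiv 0$. Concretely, the two coupling functions produced by the decomposition cancel only on the slab $\gamma-\alpha<\ci x,v^*\cd<\gamma$ rather than on all of $X$; the key observation is that the finiteness of the two $c$-conjugates confines $\dom\phi$ and $\dom(\bar\lambda h)$ exactly to this slab, so the cancellation holds precisely on the common effective domain, which is all the infimum sees. A second delicate point is the passage $\epi(\phi+\delta_\B)^c=\epi\phi^c+\epi\delta_\B^c$, that is, the exactness of the infimal convolution, which has to be extracted from the $\ep$-convexity of $\epi\phi^c+\epi\delta_\B^c$ rather than assumed.
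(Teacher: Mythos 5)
Your proof is correct and follows essentially the same route as the paper, which does not write the argument out but defers to the scheme of \cite[Th.~4.1]{FVR2016}: weak Lagrange duality plus the evaluation $(f-g+\delta_\B)^c(0,0,\alpha)=-v(P)$ for $\alpha>0$, with (AC) and $\mathcal{E}_{f-g}\neq\emptyset$ yielding $\epi(f-g+\delta_\B)^c=\epco\left(\epi(f-g)^c+\epi\delta_\B^c\right)$, the $\ep$-convexity hypothesis removing the hull and giving exactness, and (ECCQII) via Proposition~\ref{prop:Chain_Alamdbda_B} replacing $\epi\delta_\B^c$ by $\bigcup_{\lambda\in\RTD_+}\epi(\lambda h)^c$, so that decomposing the point $(0,0,\alpha,-v(P))$ produces the optimal multiplier $\bar\lambda$, with the coupling terms cancelling on the slab $\gamma-\alpha<\ci x,v^*\cd<\gamma$ exactly as you argue. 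The only point left implicit is the degenerate case $v(P)=+\infty$ (you dispatch $v(P)=-\infty$ but not $+\infty$, where attainment of the dual supremum would need a separate word), an edge case the cited proof glosses over as well.
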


In the previous proposition we impose the assumption $v(P)=\inf_\B f(x)-g(x)$. Next example shows that this condition does not imply that the set $A$ coincides with $\B$, even when both sets are e-convex.   

\begin{example}
\label{ex:Sets_A_B_v(P)}
Let $T$ be a singleton and $f,\,g:\R\to\R$, $h:\R\to\Ramp$ be functions defined as
\begin{equation*}
	f(x)=x^4,
	g(x)=x^2,
	h(x)=\left\{
	\begin{array}{ll}
		-x,&~~~ \text{if } x> 1,\\
		2,&~~~ \text{if } x= 1,\\
		+\infty, &~~~ \text{otherwise.}
	\end{array}
	\right.
\end{equation*}
We have 
\begin{equation*}
	A = ]1,+\infty[\,\subsetneqq \,[1,+\infty [\ = \B.
\end{equation*}
Now, according to the definition of $f(x)$ and $g(x)$, it is immediate to see that
\begin{equation*}
	v(P)=\inf_{x \in A} f(x)-g(x)=0=\inf_{x \in \B} f(x)-g(x)
\end{equation*}
as we claimed.\qed
\end{example}
\begin{remark}
Example \ref{ex:Sets_A_B_v(P)} shows that despite the fact that the function $g$ can be e-convex, it is not necessary for the equality
\begin{equation*}
	v(P)=\inf_{x \in \B} f(x)-g(x)
\end{equation*}
that the sets $A$ and $\B$ must be equal.
\end{remark}

Next example shows that the sufficient condition (ECCQII) in Proposition \ref{prop:ECCCQ_SD} is not necessary for strong duality for \eqref{eq:Primal_problem}-\eqref{eq:DL_standard} even when all the other hypothesis remain true.
\begin{example}
\label{ex:ECCQ_not_necessary_for_SD}
Let $X=\R$, $T=]-\infty,-1]$ and the functions in the primal problem be as follows
\begin{equation*}
	f(x)=\left\{
	\begin{array}{ll}
		x^3, & ~~~\text{if }x\geq 0,\\
		+\infty, & ~~~\text{otherwise,}
	\end{array}
	\right.
	~~g(x)=x^2,
	~~ h_t(x)=\left\{
	\begin{array}{ll}
		tx, & ~~~\text{if }x>t,\\
		+\infty, & ~~~\text{otherwise,}
	\end{array}
	\right.
	\text{ for all } t\in T.
\end{equation*}
According to the definition of $h_t$, it is immediate to see that $A=[0,+\infty[$ and, since  all the constraints are e-convex, $A=\mathcal{B}$ and (ECCQII) ends up being (ECCQ).\\
\noindent
We begin showing that (ECCQ) does not hold, for which we compute
\begin{align*}
	\epi \delta_A^c &= -\R_+\times -\R_+\times\R_{++}\times\R_+,\\
	\epi (0 h)^c &=\left\lbrace (0,0) \right\rbrace \times \R_{++} \times \R_+, 
\end{align*}
and, in case $\lambda \neq 0$,
\begin{align*}
\epi (\lambda h)^c &= \left\lbrace (x^*, y^*, \alpha, \beta): x^* \leq \sum_{t \in T} \lambda_t t, y^* \leq 0,\alpha >\hat{t} y^*, \beta \geq \hat{t}(x^*-\sum_{t \in T} \lambda_t t)\right\rbrace,
\end{align*}
being $\hat{t}=\max\left\{t,\,\lambda_t\neq 0\right\}\leq -1$.
Hence
\begin{equation}\label{K}
\bigcup_{\lambda\in\RTD_+} \epi (\lambda h)^c = (-\R_{++}\times -\R_{+}) \cup \left\lbrace (0,0) \right\rbrace \times \R_{++} \times \R_+.
\end{equation}
Finally,
\begin{equation*}
	\bigcup_{\lambda\in\RTD_+} \epi (\lambda h)^c \nsubseteqq \epi \delta_A^c,
\end{equation*}
and, consequently, (ECCQ) does not hold according to Proposition \ref{prop:Chain_Alamdbda_B}.\\
\noindent
Following the list of hypothesis in Proposition \ref{prop:ECCCQ_SD}, we continue studying the function
\begin{equation*}
	(f-g)(x)=\left\{
	\begin{array}{ll}
		x^3-x^2,&~~~\text{if } x\geq 0,\\
		+\infty, &~~~\text{otherwise.}
	\end{array}
	\right.
\end{equation*}
Computing the tangent lines to $f-g$ which minorize it, we obtain that its $c$-elementary minorant functions are
\begin{equation}\label{celementary}
a(x)=\left\{
	\begin{array}{ll}
		xx^* -\beta, & ~~~\text{if }xy^*<\alpha,\\
		+\infty, & ~~~\text{otherwise,}
	\end{array}
	\right.
\end{equation}
where either $(x^*,\beta) \in \left\lbrace (3a^2-2a,-2a^3+a^2+\gamma):a \geq {1 \over 2}, \gamma \geq 0 \right\rbrace$ (in this case, $x^* \geq -{1 \over 4}$) or  $(x^*,\beta) \in \left]-\infty,-{1 \over 4} \right[\times \R_+$  and $(y^*, \alpha) \in -\R_+ \times \R_{++}$.
 Then $\mathcal{E}_{f-g}\neq\emptyset$. On the other hand, the $c$-elementary minorant functions of $\delta_A$ fulfills, following the notation in (\ref{celementary}), that
 $(x^*,\beta,y^*, \alpha) \in -\R_+ \times \R_{+} \times -\R_{+} \times\R_{++}$ and clearly we obtain $\mathcal{E}_{f-g} \subseteq \widetilde{\mathcal{E}}_{f-g,\delta_A}$. Let us check the converse inclusion.
Take $a \in \widetilde{\mathcal{E}}_{f-g,\delta_A}$. Then we can find $a_1 \in  \mathcal{E}_{f-g}$ and $a_2 \in  \mathcal{E}_{\delta_A}$ such that, if we identify $a \equiv (x^*,\beta,y^*, \alpha),\, a_i \equiv (x_i^*,\beta_i,y_i^*, \alpha_i)$, for $i=1,2$, then $x^*=x_1^*+x_2^*,\, \beta=\beta_1+\beta_2,\, y^*=y_1^*+y_2^*$ and 
$\alpha=\alpha_1+\alpha_2$. Since it is immediate in the case $x^* < -{1 \over 4}$, let us assume that $x^* \geq -{1 \over 4}$, which means that necessarily $x_1^* \geq -{1 \over 4}$. Let $a,a_1 \geq {1 \over 2}$ such that $x^*=3a^2-2a$ and $x_1^*=3a_1^2-2a_1$, with $\beta_1 \geq -2a_1^3+a_1^2$. Name $\hat{\beta}:= -2a^3+a^2$. Then $\beta \geq \beta_1 \geq \hat{\beta}$, since $x_1^* \geq x^*$. 
Clearly $y^* \leq 0$, $\alpha >0$ and $ a \in \mathcal{E}_{f-g}$.
 
 Taking into account that 
\begin{equation*}
	\eco(f-g+\delta_A)=\left\{
	\begin{array}{ll}
		+\infty,&~~~ \text{if }x<0,\\
		-\frac{1}{4}x,&~~~\text{if } 0\leq x\leq \frac{1}{2},\\
		x^3-x^2,&~~~ \text{if } x>\frac{1}{2},
	\end{array}
	\right.
\end{equation*}
it holds
\begin{equation*}
	\sup\left\{a\,:\,a\in\mathcal{E}_{f-g}\right\} = \eco (f-g+\delta_A).
\end{equation*}
\noindent
Next item to check is the $\ep$-convexity of the set $\epi(f-g)^c+\epi \delta_A^c$. By the definition of $c$-conjugate function, we have
\begin{equation*}
	(f-g)^c(x^*,y^*,\alpha)=\left\{
	\begin{array}{ll}
		\sup_{x\geq 0} \left\{-x^3+x^2+x x^*\right\},&~~~\text{if } xy^*<\alpha,\\
		+\infty,&~~~\text{otherwise,}
	\end{array}
	\right.
\end{equation*}
which, after its evaluation we get
\begin{equation*}
	(f-g)^c(x^*,y^*,\alpha)=\left\{
	\begin{array}{ll}
		0,&~~~\text{if } x^*\leq -1/4,\,y^*\leq 0,\, \alpha>0,\\
		\beta(x^*),&~~~\text{if } x^*> -1/4,\,y^*\leq 0,\, \alpha>0,\\
		+\infty,&~~~\text{otherwise,}
	\end{array}
	\right.
\end{equation*}
with $\beta(x^*)=-(a^*)^3+(a^*)^2+x^*a^*$ being $a^*=\frac{1+\sqrt{1+3x^*}}{3}$. Hence,
\begin{align*}
	\epi(f-g)^c=&\big( ]-\infty,-1/4]\times-\R_+\times \R_{++}\times\R_+\big) \,\bigcup\\
	&\left\{ (x^*,y^*,\alpha,\beta)\,:\, x^*>-1/4,\,y^*\leq 0,\,\alpha>0,\,\beta\geq\beta(x^*)\right\},
\end{align*}
and, due to the expression of $\epi\delta_A^c$ previously calculated, 
\begin{equation*}
	\epi(f-g)^c+\epi\delta_A^c=\epi(f-g)^c,
\end{equation*}
which is $\ep$-convex.\\
\noindent
Now, since $\mathcal{B}=A$, 
\begin{equation*}
	v(P)=\inf_{x \in \B}(f-g+\delta_A).
\end{equation*}
 \noindent
Last item to verify is the fulfillment of strong duality between \eqref{eq:Primal_problem} and \eqref{eq:DL_standard}. Using previous calculations
 in this example, it follows
\begin{equation*}
	v(D_L)
	= \sup_{\lambda\in\RTD_+}\inf_{x\geq 0} \left\{ f(x)-g(x)+\lambda h(x)\right\}
	= \left(\frac{2}{3}\right)^3-\left(\frac{2}{3}\right)^2
	=\inf_{x\geq 0} \left\{x^3-x^2\right\} 
	= v(P),
\end{equation*}
being the dual problem solvable, which finally shows that (ECCQ) is not necessary for strong duality even in DC optimization problems.\qed
\end{example}

As an insight on the hypothesis stated in Proposition \ref{prop:ECCCQ_SD}, the following result relates, whenever $g$ is e-convex, the set $\mathcal{E}_{f-g}$ to the sets $\mathcal{E}_{f}$ and $\mathcal{E}_{g}$, and establishes a formula for $\epi(f-g)^c$. 
\begin{proposition}
\label{prop:More_results_on_E}
Let $g$ be an e-convex function. The following statements are true
\begin{itemize}
	\item [i)]$\mathcal{E}_{f-g}\subseteq\mathcal{E}_{f}-\mathcal{E}_{g};$
	\item[ii)] $\epi(f-g)^c
	=
	\bigcap_{(u^*,v^*,\gamma)\in\dom g^c} \left\{ \epi f^c-(u^*,0,0,g^c(u^*,v^*,\gamma))\right\};$
	\item[iii)] $\epco \big \lbrace {\epi(f-g)^c+\epi \delta_\B^c\big \rbrace}=
	\epi \inf \{h^c: h\in \mathcal{\tilde E}_{f-g, \delta_\B}\}.$
\end{itemize}
\end{proposition}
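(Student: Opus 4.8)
The plan is constructive, and the one extra ingredient I need is that a proper $\ep$-convex function always has a genuine continuous affine minorant. Indeed, $\epi g$ is an $\ep$-convex set and, for $x_0\in\dom g$, the point $(x_0,g(x_0)-1)$ lies outside it, so Definition \ref{def:Econvex} yields a separating functional $(x^*,s)\in X^*\times\R$; testing it against $(x_0,g(x_0))\in\epi g$ forces $s<0$, and dividing by $-s$ produces a continuous affine $a_2\leq g$ with $\dom a_2=X$, which is an element of $\mathcal{E}_g$ (it is e-affine with $y^*=0$, $\alpha=1$). Now given $a\in\mathcal{E}_{f-g}$ I would set $a_1:=a+a_2$. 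Since $a_2$ is finite everywhere, $a_1$ is again e-affine with the same domain as $a$; from $a\leq f-g$, $a_2\leq g$ and the standing assumption $\dom f\subseteq\dom g$ one checks $a_1\leq f$ (using $\dom f\subseteq\dom a$), so $a_1\in\mathcal{E}_f$. As $\dom a_2=X$, the difference $a_1-a_2$ reproduces $a$ pointwise, giving $a=a_1-a_2\in\mathcal{E}_f-\mathcal{E}_g$.

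\textbf{Part (ii).} Here I would compute $(f-g)^c$ explicitly. Since $g$ is $\ep$-convex and does not attain $-\infty$, Theorem \ref{thm:Theorem_charac} iii) gives $g=g^{cc'}$, that is, $g(x)=\sup_{(u^*,v^*,\gamma)\in\dom g^c}\{c(x,(u^*,v^*,\gamma))-g^c(u^*,v^*,\gamma)\}$. Substituting this into the definition of $(f-g)^c$ and interchanging the two suprema reduces the inner problem to $\sup_{x}\{c(x,(x^*,y^*,\alpha))+c(x,(u^*,v^*,\gamma))-f(x)\}$. The crucial simplification is that for $(u^*,v^*,\gamma)\in\dom g^c$ one has $\dom f\subseteq\dom g\subseteq H_{v^*,\gamma}^{-}$, so the second coupling constraint is inactive on $\dom f$ and the two coupling terms collapse to $c(x,(x^*+u^*,y^*,\alpha))$. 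This gives $(f-g)^c(x^*,y^*,\alpha)=\sup_{(u^*,v^*,\gamma)\in\dom g^c}\{f^c(x^*+u^*,y^*,\alpha)-g^c(u^*,v^*,\gamma)\}$, and passing to epigraphs, $(x^*,y^*,\alpha,\beta)\in\epi(f-g)^c$ iff $(x^*+u^*,y^*,\alpha,\beta+g^c(u^*,v^*,\gamma))\in\epi f^c$ for every $(u^*,v^*,\gamma)\in\dom g^c$, which is exactly the claimed intersection of translates of $\epi f^c$.

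\textbf{Part (iii).} Write $p:=(f-g)^c$ and $q:=\delta_\B^c$; both are $\ep$-convex by Theorem \ref{thm:Theorem_charac} i), so $\epi p$ and $\epi q$ are $\ep$-convex sets. First I would identify the right-hand side with an infimal convolution. For $h\in\widetilde{\mathcal{E}}_{f-g,\delta_\B}$ built from $a_1\in\mathcal{E}_{f-g}$ and $a_2\in\mathcal{E}_{\delta_\B}$, a direct conjugation of the e-affine function $h$ should give $h^c=a_1^c\oplus a_2^c$ (the pointwise counterpart of \cite[Cor.~8]{FVR2012}); combining this with the elementary identity $\phi^c=\inf\{a^c:a\in\mathcal{E}_\phi\}$ and with the fact that $\oplus$ commutes with pointwise infima over the two factors yields $\inf\{h^c:h\in\widetilde{\mathcal{E}}_{f-g,\delta_\B}\}=p\oplus q$. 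Second I would prove $\epi(p\oplus q)=\epco(\epi p+\epi q)$. The inclusion $\epi p+\epi q\subseteq\epi(p\oplus q)$ is immediate; for the converse I would use Lemma \ref{Lema9_9} to realise each point of $\epi p$ (resp. $\epi q$) by an e-affine minorant $a_1$ (resp. $a_2$) and hence by a point of $\epi a_1^c\subseteq\epi p$ (resp. $\epi a_2^c\subseteq\epi q$), so that, via $\epi(a_1^c\oplus a_2^c)=\epco(\epi a_1^c+\epi a_2^c)$ together with the downward closedness of epigraphs, every $\epi h^c$ (and thus $\epi(p\oplus q)$) sits inside $\epco(\epi p+\epi q)$. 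Parts ii) above and Proposition \ref{prop:Chain_Alamdbda_B} enter here through the explicit descriptions of $\epi(f-g)^c$ and $\epi\delta_\B^c$.

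The hard part will be the remaining reverse inclusion $\epco(\epi p+\epi q)\subseteq\epi(p\oplus q)$, which amounts to showing that $\epi(p\oplus q)$ — equivalently $\epi\inf\{h^c:h\in\widetilde{\mathcal{E}}_{f-g,\delta_\B}\}$ — is \emph{already} $\ep$-convex; infimal convolution does not preserve $\ep$-convexity in general, so this step must exploit the specific structure of $q=\delta_\B^c$ (notably its representation $\epi\delta_\B^c=\epco(\bigcup_{\lambda\in\RTD_+}\epi(\lambda h)^c)$ from Proposition \ref{prop:Chain_Alamdbda_B}). I also expect the bookkeeping with the auxiliary half-space coordinates $(y^*,\alpha)$ and the support-function terms appearing in $a_1^c$ and $a_2^c$ to be the most delicate point in verifying both $h^c=a_1^c\oplus a_2^c$ and the attainment of the infimal convolution.
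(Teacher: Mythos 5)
Your parts (i) and (ii) are correct and essentially coincide with the paper's own arguments: for (i) the paper likewise adds an affine minorant $b\in\mathcal{E}_g$ of $g$ to $a$ (its accompanying remark supplies exactly the existence of affine minorants that you derive by separation; note only that $\epi g$ is an e-convex subset of $X\times\R$, not an ``$\ep$-convex'' set --- that term is reserved for subsets of $W\times\R$), and for (ii) the paper also substitutes the representation $g=g^{cc^{\prime}}$ and uses $\dom f\subseteq\dom g\subseteq H_{v^*,\gamma}^{-}$ to collapse the two coupling terms into $c(\cdot,(x^*+u^*,y^*,\alpha))$.

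Part (iii), however, contains a genuine gap, and it occurs earlier than where you place the difficulty. The identity $h^c=a_1^c\oplus a_2^c$ is false: by Definition \ref{definition:Set Efg tilde}, $h$ is \emph{not} $a_1+a_2$ --- its domain is the single half-space $H_{y_1^*+y_2^*,\alpha_1+\alpha_2}^{-}$, which strictly contains $\dom a_1\cap\dom a_2$ whenever the data $(y_i^*,\alpha_i)$ do not align, so $h\leq a_1+a_2$ and the conjugates genuinely differ. Concretely, in $X=\R$ take $a_1=\langle\cdot,x_1^*\rangle-\beta_1$ on $\{x<1\}$ (data $y_1^*=1$, $\alpha_1=1$) and $a_2=\langle\cdot,x_2^*\rangle-\beta_2$ on $\{x>-1\}$ (data $y_2^*=-1$, $\alpha_2=1$). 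Then $h$ is affine on all of $\R$, so $h^c(x^*,0,\alpha)$ is finite only at $x^*=x_1^*+x_2^*$, whereas a direct computation gives $(a_1^c\oplus a_2^c)(x^*,0,\alpha)=\beta_1+\beta_2+|x^*-x_1^*-x_2^*|$, finite for every $x^*$. Consequently your identification $\inf\{h^c:h\in\widetilde{\mathcal{E}}_{f-g,\delta_\B}\}=p\oplus q$ fails; in general one only has the sandwich
\begin{equation*}
\epi p+\epi q\ \subseteq\ \epi\inf\{h^c:h\in\widetilde{\mathcal{E}}_{f-g,\delta_\B}\}\ \subseteq\ \epi(p\oplus q),
\end{equation*}
and the content of statement (iii) is precisely that the \emph{middle} set, not $\epi(p\oplus q)$, equals $\epco\{\epi p+\epi q\}$. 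This also explains why the step you flag as ``the hard part'' is not merely delicate but unprovable: the proposition assumes neither (AC) nor (ECC), so nothing forces $\epi(p\oplus q)$ to be $\ep$-convex, and your plan would have proved a strictly stronger statement that is false in general. The paper avoids infimal convolution altogether: it applies \cite[Th.~15]{FVR2012} to obtain $\epco\{\epi(f-g)^c+\epi\delta_\B^c\}=\epi\bigl(\sup\{h:h\in\widetilde{\mathcal{E}}_{f-g,\delta_\B}\}\bigr)^c$ and then converts the conjugate of the supremum into the infimum of the conjugates over that family.
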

\begin{proof}
$i)$ Take an e-affine function $a \in \mathcal{E}_{f-g}$. Then, for all $x \in X$, $$a(x)+g(x)\leq f(x).$$ 
According to Theorem \ref{theorem:charpef}, $g=\sup\{b:b\in \mathcal{E}_{g}\}$, so naming $\mathcal{A}:=\{b\in \mathcal{E}_{g}:a+b \text{ is e-affine}\}$, we obtain, for all $x \in X$,
\begin{equation*}
	\sup_{b\in \mathcal{A}} (a+b)(x)\leq f(x)
\end{equation*}
and 
\begin{equation*}
	\sup_{b\in \mathcal{A}} (a+b)(x)\leq \sup_{c \in \mathcal{E}_{f}}c(x)\leq f(x).
\end{equation*}
Therefore, $a+b \in \mathcal{E}_{f}$ and $a \in \mathcal{E}_{f}-\mathcal{E}_g$. \\\\
$ii)$ Due to basic properties of the conjugate function and the e-convexity of the function $g$, we have
\begin{align*}
	(f- g)^c &= \biggl(f-\sup_{(u^*,v^*,\gamma)\in\dom g^c}\left\{ c(\cdot,(u^*,v^*,\gamma))-g^c(u^*,v^*,\gamma)\right\}\biggr)^c\\
	&= \left(\inf_{(u^*,v^*,\gamma)\in\dom g^c} \biggl\{ f+g^c(u^*,v^*,\gamma)-c(\cdot,(u^*,v^*,\gamma))\biggr\} \right)^c\\
	&= \sup_{(u^*,v^*,\gamma)\in\dom g^c} \biggl\{ f-c(\cdot,(u^*,v^*,\gamma))+g^c(u^*,v^*,\gamma)\biggr\}^c.
\end{align*}
Therefore, we can derive
\begin{equation*}
	\epi(f- g)^c = \bigcap_{(u^*,v^*,\gamma)\in\dom g^c} \epi\Bigl(f-c(\cdot,(u^*,v^*,\gamma))+g^c(u^*,v^*,\gamma)\Bigr)^c.
\end{equation*}
So, if $(x^*,y^*,\alpha,\beta)\in \epi(f-g)^c$, it is equivalent to 
\begin{equation*}
	(x^*,y^*,\alpha,\beta)\in\epi\Bigl(f-c(\cdot,(u^*,v^*,\gamma)) + g^c(u^*,v^*,\gamma)	\Bigr)^c,
\end{equation*}
for any point $(u^*,v^*,\gamma) \in \dom g^c$, and using the fact that $\dom f \subseteq \dom g \subseteq H_{v^*,\gamma}^-$, we conclude that $f^c(x^*+u^*,y^*,\alpha)\leq \beta+g^c(u^*,v^*,\gamma)$,
which means that
\begin{equation*}
	(x^*+u^*,y^*,\alpha,\beta+g^c(u^*,v^*,\gamma)) \in \epi f^c
\end{equation*}
or, equivalently, $(x^*,y^*,\alpha,\beta)\in \epi f^c-(u^*,0,0,g^c(u^*,v^*,\gamma)).$\\\\
$iii)$ Applying conveniently \cite[Th.~15]{FVR2012}, we conclude that
\begin{equation*}
	\epco \big \lbrace {\epi(f-g)^c+\epi \delta_\B ^c\big \rbrace}=\epi \left( \sup \big \{h:h\in \mathcal{\tilde E}_{f-g, \delta_\B}\big \}\right)^c.
\end{equation*}
The desired results comes directly from the equality
\begin{equation*}
	 \Big( \sup \big \{h:h \in \mathcal{H}\big \}\Big)^c=\inf \{h^c: h \in \mathcal{H}\},
\end{equation*}
where $\mathcal{H}$ is an arbitrary family of functions.
\qed
\end{proof}
\begin{remark}
Observe that the set $\mathcal{A}$ defined in the proof of Proposition \ref{prop:More_results_on_E} $i)$ is always nonempty. The reason for this is because, firstly, any affine function $\left\langle \cdot, x^* \right\rangle-\beta$ is trivially an e-affine function (take $u^*=0$ and $\alpha >0$). Secondly, for any proper and convex function there always exist affine minorants and, finally, the addition of an e-affine and an affine functions is an e-affine function.
\end{remark}

\section{Duality results for \eqref{eq:Primal_problem}$-$\eqref{eq:DL_tilde}}
\label{sec:SD_DL_tilde}

The purpose of this section is mainly to establish conditions ensuring strong duality for the dual pair \eqref{eq:Primal_problem}$-$\eqref{eq:DL_tilde}. Nevertheless and for the sake of completeness, we previously give an insight concerning weak duality for this primal-dual pair.

\subsection{Weak duality}
We would like to start this subsection mentioning that the fulfillment of condition (AC), for any $\lambda\in\RTD_+$,
\begin{equation}
	\label{eq:AC_flambdah}
	\eco (f+ \lambda h) =\sup \{ a: a \in \mathcal{\tilde E}_{f,\lambda h}\}
\end{equation}
is the unique responsible for the inequality $v\eqref{eq:DL_bar}\geq v\eqref{eq:DL_tilde}$. In other words, the function $g$ plays no role at all between the comparison of the optimal values of these two dual problems. For this reason, we recap that every result dealing with weak duality between \eqref{eq:Primal_problem}$-$\eqref{eq:DL_bar} treated in \cite[Sect.~4]{FV2024}, also applies here ensuring weak duality for the dual pair \eqref{eq:Primal_problem}$-$\eqref{eq:DL_tilde}. 

However, since $v\eqref{eq:DL_tilde}$ lies below $v\eqref{eq:DL_bar}$ provided \eqref{eq:AC_flambdah} holds, those regularity conditions for the latter will not ensure, necessarily, strong duality for the former and we need to go into their analysis in the next subsection.

\subsection{Strong duality} 
Let us consider again the set defined in the proof of Proposition \ref{prop:Chain_Alamdbda_B}
\begin{equation}\label{eq:Definition_of_K_for_us}
	K=
	\bigcup_{\lambda\in\RTD_+} \epi(\lambda h)^c 
\end{equation}
and set the condition
\begin{equation}
	\tag{$\ECC$}
	\epi f^c + K ~~\text{is } \ep\text{-convex}.
\end{equation} 
In order to put some context regarding this set and condition, it is deserved to be mentioned that condition (ECC) was used in \cite{FV2018} seeking for strong Fenchel-Lagrange duality conditions for optimization problems $(P_0)$, and $f$ and $h_t$, for all $t\in T$, were all proper e-convex functions. On the other hand, the condition for $K$ to be $\ep$-convex is exactly the fulfillment of \eqref{eq:ECCQ}, since $\epco K=\epi \delta_A^c$ due to \eqref{eq:Prop4_1_FVR2015} in that context.

Now, without any requirement of even convexity of the functions involved in the problem $(P)$, we state the main result in this section, which, apart from being a crucial link between the (ECC) property and strong duality for $(P)-\eqref{eq:DL_tilde}$, it also provides a characterization of (ECC) in terms of  $\varepsilon$-$c$-subdifferentials; see \cite[Th.~3.1]{DNV2010} for its classical counterpart using standard duality theory and also \cite[Th.~11]{FVR2012} for a related result in the line of evenly convex optimization. 

\begin{theorem}\label{thm:Theorem3.1}
If $f$ and $\delta_\B$ satisfy condition \emph{(AC)} and $K$ is $\ep$-convex, the following statements are equivalent:
\begin{itemize}
	\item[i)] Condition \emph{(ECC)} holds;
	
	\item[ii)] For every $(x^*,y^*,\alpha)\in W$,
	\begin{equation*}
	\begin{array}{ll}
		&(f+\delta_\B)^c(x^*,y^*,\alpha)\\
		&=\min_{\lambda\in\RTD_+} \min_{(u^*,v^*,\gamma)\in W} \left\{f^c(u^*,v^*,\gamma)+(\lambda h)^c(x^*-u^*,y^*-v^*,\alpha-\gamma)\right\}.
	\end{array}
	\end{equation*}
	\item[iii)] For all $\xb\in \B\cap \dom f$ and $\varepsilon\geq 0$,
	\begin{equation}\label{eq:Eq_6_Thm_3.1}
		\partial_{c,\varepsilon} (f+\delta_\B)(\xb)
		=
		\bigcup_{\lambda\in\RTD_+} \bigcup_{\substack{\varepsilon_1\geq 0,\varepsilon_2\geq 0\\\varepsilon_1+\varepsilon_2=\varepsilon+(\eco\lambda h)(\xb)}} ~ \left\{ \partial_{c,\varepsilon_1} f(\xb) + \partial_{c,\varepsilon_2} (\eco \lambda h)(\xb)\right\}.
	\end{equation}
\end{itemize}
\end{theorem}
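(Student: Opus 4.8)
The plan is to transfer the entire statement to the level of epigraphs in $W\times\R$ and pivot on a single epigraph identity. I first record the consequences of the two standing hypotheses. Since $f$ and $\delta_\B$ satisfy \textup{(AC)}, \cite[Cor.~8]{FVR2012} together with Corollary \ref{cor:Corollaryconseq} gives
\begin{equation*}
	(f+\delta_\B)^c=\bigl(\eco(f+\delta_\B)\bigr)^c=\epco(f^c\oplus\delta_\B^c),
\end{equation*}
which is an $\ep$-convex function by Theorem \ref{thm:Theorem_charac} i). Since $K$ is $\ep$-convex, Proposition \ref{prop:Chain_Alamdbda_B} yields $\epi\delta_\B^c=\epco K=K=\bigcup_{\lambda\in\RTD_+}\epi(\lambda h)^c$, and reading off the lowest height for each argument gives $\delta_\B^c=\inf_{\lambda\in\RTD_+}(\lambda h)^c$. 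In particular $\epi f^c+K=\epi f^c+\epi\delta_\B^c$, so that \textup{(ECC)} is precisely the $\ep$-convexity of the Minkowski sum $\epi f^c+\epi\delta_\B^c$.

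Next I would prove the pivotal equivalence
\begin{equation*}
	\textup{(ECC)}\quad\Longleftrightarrow\quad \epi(f+\delta_\B)^c=\epi f^c+\epi\delta_\B^c. \tag{$\star$}
\end{equation*}
The sum $\epi f^c+\epi\delta_\B^c$ is always upward closed and contained in $\epi(f+\delta_\B)^c$, the latter because $(f+\delta_\B)^c=\epco(f^c\oplus\delta_\B^c)\le f^c\oplus\delta_\B^c$. If \textup{(ECC)} holds, this sum is an $\ep$-convex set, hence by the very definition of $\ep$-convex set it is the epigraph of an $\ep$-convex function; being an epigraph it contains its lower boundary, which forces the infimal convolution $f^c\oplus\delta_\B^c$ to be exact and $\ep$-convex, thus equal to its own $\ep$-convex hull $(f+\delta_\B)^c$, and taking epigraphs returns $(\star)$. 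Conversely, $(\star)$ exhibits $\epi f^c+\epi\delta_\B^c$ as the epigraph of the $\ep$-convex function $(f+\delta_\B)^c$, i.e. \textup{(ECC)}.

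It then remains to see that both ii) and iii) are readouts of $(\star)$. For ii): since the left-hand epigraph in $(\star)$ contains the boundary point $((x^*,y^*,\alpha),(f+\delta_\B)^c(x^*,y^*,\alpha))$ whenever the value is finite, that point lies in $\epi f^c+\epi\delta_\B^c$, giving a splitting attaining $\le (f+\delta_\B)^c(x^*,y^*,\alpha)$, which combined with $(f+\delta_\B)^c\le f^c\oplus\delta_\B^c$ yields equality with the infimal convolution \emph{attained}; substituting $\delta_\B^c=\inf_\lambda(\lambda h)^c$ and noting, via $K=\bigcup_\lambda\epi(\lambda h)^c$, that the inner infimum over $\lambda$ is attained whenever $\delta_\B^c$ is finite, turns the two infima into the two minima of ii), the converse being immediate. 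For iii): fix $\xb\in\B\cap\dom f$; since $(\eco\lambda h)(\xb)\le 0$ for every $\lambda$, Lemma \ref{Lema9_9} applies to $f+\delta_\B$, to $f$ and to each $\eco\lambda h$ at the common base point $\xb$, and rewrites $\epi(f+\delta_\B)^c$, $\epi f^c$ and $\epi(\lambda h)^c=\epi(\eco\lambda h)^c$ as unions of lifted $\varepsilon$-$c$-subdifferentials. Substituting these into $(\star)$, written as $\epi(f+\delta_\B)^c=\bigcup_{\lambda\in\RTD_+}\bigl(\epi f^c+\epi(\eco\lambda h)^c\bigr)$, and equating $W$-components and then heights, the height balance collapses (using $(f+\delta_\B)(\xb)=f(\xb)$) to $\varepsilon_1+\varepsilon_2=\varepsilon+(\eco\lambda h)(\xb)$, while the $W$-components produce the Minkowski sum $\partial_{c,\varepsilon_1} f(\xb)+\partial_{c,\varepsilon_2}(\eco\lambda h)(\xb)$; this is exactly \eqref{eq:Eq_6_Thm_3.1}. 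Reading the same Lemma \ref{Lema9_9} correspondence backwards at any single admissible $\xb$ recovers $(\star)$.

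The main obstacle I anticipate is the rigorous justification of the pivotal step \textup{(ECC)}$\Leftrightarrow(\star)$, where one must control the gap between the Minkowski sum $\epi f^c+\epi\delta_\B^c$, the epigraph $\epi(f^c\oplus\delta_\B^c)$ of the infimal convolution, and the hull $(f+\delta_\B)^c$: the decisive observation is that an $\ep$-convex set is \emph{by definition} an epigraph, so $\ep$-convexity of the sum already encodes exactness of the infimal convolution, and this is precisely what upgrades the infima in ii) to minima. A second delicate point is the vertical bookkeeping in the Lemma \ref{Lema9_9} translation — verifying that $\xb$ is simultaneously admissible for $f$, $f+\delta_\B$ and every $\eco\lambda h$, and that the additive constant $(\eco\lambda h)(\xb)$ is correctly routed into the $\varepsilon$-balance — together with the degenerate case $\B\cap\dom f=\emptyset$, which must be dispatched separately.
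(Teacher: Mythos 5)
Your proposal is correct and follows essentially the same route as the paper: your pivotal identity $(\star)$, namely $\epi(f+\delta_\B)^c=\epi f^c+\epi\delta_\B^c$, is precisely what the paper establishes inside its proof of i) $\Rightarrow$ ii), by combining the (AC)-based equality $\epi(f+\delta_\B)^c=\epco(\epi f^c+\epi\delta_\B^c)$ with Proposition \ref{prop:Chain_Alamdbda_B} and the $\ep$-convexity of $K$, and your translation of ii) and iii) through the Lemma \ref{Lema9_9} correspondence between epigraph points and $\varepsilon$-$c$-subdifferentials is exactly the mechanism of the paper's ii) $\Rightarrow$ iii) and iii) $\Rightarrow$ i) steps, merely reorganized hub-and-spoke instead of as a cycle. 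Your extra observations --- that $\ep$-convexity of the Minkowski sum forces exactness of $f^c\oplus\delta_\B^c$, and that the degenerate case $\B\cap\dom f=\emptyset$ needs separate dispatch (an implicit assumption the paper also makes when fixing $\xb\in\dom(f+\delta_\B)$) --- are sound but do not change the argument.
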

\begin{proof}
\textit{i) $\Rightarrow$ ii)} Take $(x^*,y^*,\alpha)\in W$. Then, for any $(u^*,v^*,\gamma)\in W$, $\lambda\in \RTD_+$ and $x\in \B$, we have, taking into account Corollary \ref{cor:Corollaryconseq},
\begin{align*}
	& f^c(u^*,v^*,\gamma)+(\lambda h)^c(x^*-u^*,y^*-v^*,\alpha-\gamma)\\
	&= f^c(u^*,v^*,\gamma)+(\eco\lambda h)^c(x^*-u^*,y^*-v^*,\alpha-\gamma)\\
	& \geq c(x,(x^*,y^*,\alpha))-f(x),
\end{align*}
thanks to the definition of the $c$-conjugate function, the subadditivity of the coupling function in its second component and the fact that $(\eco\lambda h)(x)\leq 0$ provided $x\in \B$. This implies that
\begin{align*}
	f^c(u^*,v^*,\gamma)+(\lambda h)^c(x^*-u^*,y^*-v^*,\alpha-\gamma)\geq c(x,(x^*,y^*,\alpha))-(f+\delta_\B)(x),
\end{align*}
for all $(u^*,v^*,\gamma)\in W$, $\lambda\in \RTD_+$ and $x\in X$, obtaining, for each $(x^*,y^*,\alpha) \in W$,
\begin{equation}\label{eq:8_Thm_3.1}
\begin{array}{ll}
	& \inf_{\lambda\in\RTD_+} \inf_{(u^*,v^*,\gamma)\in W} ~ f^c(u^*,v^*,\gamma)+(\lambda h)^c(x^*-u^*,y^*-v^*,\alpha-\gamma)\\
& \geq ~ \sup_{x\in X} \{c(x,(x^*,y^*,\alpha))-(f+\delta_\B)(x)\}\\
& = (f+\delta_\B)^c(x^*,y^*,\alpha).
\end{array}
\end{equation}
We will see that both infima at (\ref{eq:8_Thm_3.1}) are attained. Clearly, if $(x^*,y^*,\alpha)\notin \dom (f+\delta_\B)^c$, $(f+\delta_\B)^c(x^*,y^*,\alpha)=+\infty$ and due to (\ref{eq:8_Thm_3.1}) we obtain \textit{ii)}. In other case, thanks to condition (AC), Theorem 4 in \cite{FVR2012} can be applied, and we obtain, taking into account Corollary \ref{cor:Corollaryconseq}, the following chain of equalities
\begin{equation}\label{eq:Th4.13}
	\epi (f+\delta_\B)^c = \epi \left(\eco(f+\delta_\B)\right)^c= \epco (\epi f^c+\epi \delta_\B^c).
	\end{equation}
Now recalling Proposition \ref{prop:Chain_Alamdbda_B}, the $\ep$-convexity of $K$  allows us to write	
	$$\epi (f+\delta_\B)^c = \epco \left( \epi f^c + \bigcup_{\lambda \in \RTD_+} \epi (\lambda h)^c \right),$$
which, together with condition (ECC), it is equivalent to the equality	
$$\epi (f+\delta_\B)^c=  \epi f^c + \bigcup_{\lambda  \in \RTD_+} \epi (\lambda h)^c. $$
Then, $(x^*,y^*,\alpha,(f+\delta_\B)^c(x^*,y^*,\alpha))\in \epi f^c + \bigcup_\lambda \epi (\lambda h)^c$, and there exist $\bar\lambda\in\RTD_+$ and $(\bar u^*,\bar v^*,\bar \gamma, \bar\beta)\in \epi f^c$, such that
\begin{equation*}
	\Big(x^*-\bar u^*,y^*-\bar v^*,\alpha-\bar \gamma,(f+\delta_\B)^c(x^*,y^*,\alpha)- \bar \beta\Big) \in \epi (\bar \lambda h)^c.
\end{equation*}
Hence
\begin{align*}
	(f+\delta_\B)^c(x^*,y^*,\alpha) &=  \bar\beta +(f+\delta_\B)^c(x^*,y^*,\alpha)- \bar \beta\\
	&\geq f^c(\bar u^*,\bar v^*,\bar \gamma) + (\bar \lambda h)^c(x^*-\bar u^*,y^*-\bar v^*,\alpha-\bar \gamma).
\end{align*}
Due to (\ref{eq:8_Thm_3.1}) both infima are attained and, moreover, equality in $ii)$ holds.\\

\textit{ii) $\Rightarrow$ iii)} Let $\varepsilon\geq 0$, $\xb\in\dom(f+\delta_\B)$ and take $(x^*,y^*,\alpha)$ a point from the right-hand-side in (\ref{eq:Eq_6_Thm_3.1}). Let $\lambda\in\RTD_+$, $\varepsilon_i\geq 0$ with $i=1,2$, $\varepsilon_1+\varepsilon_2=\varepsilon+(\eco \lambda h)(\xb)$, verifying
\begin{equation*}
	\left.
	\begin{array}{rl}
		(u^*,v^*,\gamma) & \in\partial_{c,\varepsilon_1} f(\xb),\\	
		(x^*-u^*,y^*-v^*,\alpha-\gamma) & \in\partial_{c,\varepsilon_2} (\eco\lambda h)(\xb).
	\end{array}
	\right\}
\end{equation*}
Applying the definition of the $c$-subdifferential set we obtain
\begin{align*}
	 &(f + \eco\lambda h)(x) -(f+\eco\lambda h)(\xb)\\
	 & \geq c(x,(u^*,v^*,\gamma))+c(x,(x^*-u^*,y^*-v^*,\alpha-\gamma))\\
	 &- \left[c(\bar x,(u^*,v^*,\gamma))+c(\bar x,(x^*-u^*,y^*-v^*,\alpha-\gamma))\right]- (\varepsilon_1+\varepsilon_2),
\end{align*}
with $\ci \xb,v^*\cd < \gamma$ and $\ci \xb,y^*-v^*\cd < \alpha-\gamma$ . Thanks to the subadditivity of the coupling function in its second component, and taking into account that $\varepsilon_1+\varepsilon_2=\varepsilon+(\eco\lambda h)(\xb)$ and $\ci \xb,y^*\cd < \alpha$, it holds
\begin{equation*}
	f(x)-f(\xb)+(\eco\lambda h)(x) \geq
	c(x,(x^*,y^*,\alpha)) -c(\bar x,(x^*,y^*,\alpha)) -\varepsilon, 
	\end{equation*}
for all $x\in X$. According to the definition of the set $\B$, we can rewrite this inequality as follows
\begin{equation*}
	f(x)-f(\xb) \geq
	c(x,(x^*,y^*,\alpha)) -c(\bar x,(x^*,y^*,\alpha)) -\varepsilon,
\end{equation*}
for all $x\in \B$, and $\ci \xb,v^*\cd<\alpha$. Equivalently,
\begin{equation*}
	(f+\delta_\B)(x)-(f+\delta_\B)(\xb) \geq
	c(x,(x^*,y^*,\alpha)) -c(\bar x,(x^*,y^*,\alpha)) -\varepsilon, 
\end{equation*}
for all $x\in X$, and $\ci \xb,y^*\cd<\alpha$, hence
\begin{equation*}
	(x^*,y^*,\alpha)\in \partial_{c,\varepsilon}(f+\delta_\B)(\xb).
\end{equation*}
Let us prove the converse inclusion in the case $\partial_{c,\varepsilon} (f+\delta_\B)(\xb)\neq \emptyset$ (otherwise it is trivial). Take $(x^*,y^*,\alpha)\in\partial_{c,\varepsilon}(f+\delta_\B)(\xb)$. Since $\xb\in\dom(f+\delta_\B)$, from Lemma \ref{Lema9_9} we obtain
\begin{equation}\label{eq:11_Thm_3.1}
	(f+\delta_\B)^c(x^*,y^*,\alpha) \leq \varepsilon + \ci \xb,x^*\cd-(f+\delta_\B)(\xb).
\end{equation}
Assuming that \textit{ii)} is true, there exist $\lambda \in{\RTD_+}$ and $(u^*,v^*,\gamma)\in W$ such that
\begin{equation*}
	(f+\delta_\B)^c(x^*,y^*,\alpha)=f^c(u^*,v^*,\gamma)+(\lambda h)^c(x^*-u^*,y^*-v^*,\alpha-\gamma)
\end{equation*}
\noindent
and, rewriting (\ref{eq:11_Thm_3.1}), it follows
\begin{equation}\label{eq:12_Thm3.1}
	f^c(u^*,v^*,\gamma)+(\lambda h)^c(x^*-u^*,y^*-v^*,\alpha-\gamma) \leq
	\varepsilon + \ci \xb,x^*\cd-(f+\delta_\B)(\xb),
\end{equation}
which leads $(u^*,v^*,\gamma)\in \dom f^c$ and $(u^*,v^*,\gamma,f^c(u^*,v^*,\gamma))\in \epi f^c$. Applying Lemma \ref{Lema9_9} once more, there exists $\varepsilon_1 \geq 0$ such that $(u^*,v^*,\gamma)\in\partial_{c,\varepsilon_1} f(\xb)$ and
\begin{equation}\label{eq:a_Thm3.1}
	 f^c(u^*,v^*,\gamma)=\ci \xb,u^*\cd + \varepsilon_1-f(\xb).
\end{equation}

\noindent
Similarly, 
\begin{equation*}
	\big(x^*-u^*,y^*-v^*,\alpha-\gamma,(\lambda h)^c(x^*-u^*,y^*-v^*,\alpha-\gamma)\big)\in \epi (\lambda h)^c=\epi (\eco \lambda h)^c,
\end{equation*}
so we can find $\varepsilon'_2 \geq 0$ verifying $(x^*-u^*,y^*-v^*,\alpha-\gamma)\in\partial_{c,\varepsilon' _2} (\eco\lambda h)(\xb)$ and
\begin{equation}\label{eq:b_Thm3.1}
	(\lambda h)^c(x^*-u^*,y^*-v^*,\alpha-\gamma)=\ci \xb,x^*-u^*\cd + \varepsilon' _2-(\eco\lambda h)(\xb).
\end{equation}
Replacing $f^c(u^*,v^*,\gamma)$ and $(\lambda h)^c(x^*-u^*,y^*-v^*,\alpha-\gamma)$ in (\ref{eq:12_Thm3.1}) from the equalities (\ref{eq:a_Thm3.1}) and (\ref{eq:b_Thm3.1}), respectively, we have that
\begin{equation*}
	\varepsilon+(\eco\lambda h)(\xb) \geq \varepsilon_1+\varepsilon' _2.
\end{equation*}
Defining $\varepsilon_2:=\varepsilon+(\eco\lambda h)(\xb) -\varepsilon_1$, it holds $\varepsilon+(\eco \lambda h)(\xb)= \varepsilon_1+\varepsilon_2$ and, since $\varepsilon_2\geq \varepsilon^\prime_2$, clearly $(x^*-u^*,y^*-v^*,\alpha-\gamma)\in\partial_{c,\varepsilon _2} (\eco\lambda h)(\xb)$.\\

\textit{iii) $\Rightarrow$ i)} Recalling that equation \eqref{eq:Th4.13} is true under the assumptions imposed in this theorem,  take $(x^*,y^*,\alpha,\beta)\in\epco(\epi f^c+K)=\epi(f+\delta_\B)^c$. Fix a point $\xb\in \dom(f+\delta_\B)$ and apply Lemma \ref{Lema9_9}, then there exists $\varepsilon\geq 0$ such that
\begin{align*}
	(x^*,y^*,\alpha)&\in\partial_{c,\varepsilon}(f+\delta_\B)(\xb),\\
	\beta&=\ci \xb,x^*\cd-f(\xb)+\varepsilon.
\end{align*}
Then, from \textit{iii)}, there exist $\lambda\in \RTD_+$, $(u^*,v^*,\gamma)\in W$ and $\varepsilon_1, \varepsilon_2\geq 0$, such that
\begin{align*}
	(u^*,v^*,\gamma)&\in\partial_{c,\varepsilon_1} f(\xb),\\
	(x^*-u^*,y^*-v^*,\alpha-\gamma)&\in\partial_{c,\varepsilon_2} (\eco \lambda h)(\xb),\\
	\varepsilon_1+\varepsilon_2&=\varepsilon+(\eco \lambda h)(\xb).
\end{align*}
If we define
\begin{align*}
	s&:=\ci\xb,u^*\cd-f(\xb)+\varepsilon_1,\\
	t&:=\ci\xb,x^*-u^*\cd-(\eco\lambda h)(\xb)+\varepsilon_2,
\end{align*}
and apply again Lemma \ref{Lema9_9} together with Corollary \ref{cor:Corollaryconseq}, we obtain
\begin{align*}
	(u^*,v^*,\gamma,s)&\in\epi f^c,\\
	(x^*-u^*,y^*-v^*,\alpha-\gamma,t)&\in\epi (\lambda h)^c.
\end{align*}
Moreover, $s+t=\beta$ and $\varepsilon_1+\varepsilon_2=\varepsilon+(\eco\lambda h)(\xb)$. Hence, $(x^*,y^*,\alpha,\beta)\in \epi f^c +\epi(\lambda h)^c$, the following inclusion holds
\begin{equation*}
	\epco(\epi f^c+K) \subseteq \epi f^c + K
\end{equation*}
and the set $\epi f^c +  K$ is $\ep$-convex. \qed
\end{proof}

Next result is useful in the development of a new regularity condition for strong duality between \eqref{eq:Primal_problem} and \eqref{eq:DL_tilde}. It follows from \cite[Th.~3.1]{ML1990} adding that $g$ is an e-convex function.
\begin{lemma}\label{cor:TD_g_econvex}
Let $f,g:X\to\Ramp$ be proper functions with $g$ e-convex. Then,
\begin{equation*}
	\inf_{x\in X} \left\{ f(x)-g(x)\right\}
	=
	\inf_{(x^*,y^*,\alpha)\in W} \left\{ g^c(x^*,y^*,\alpha) - f^c(x^*,y^*,\alpha)\right\}.
\end{equation*}
\end{lemma}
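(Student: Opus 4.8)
The plan is to recognize the claimed identity as an instance of the generalized Toland--Singer duality formula transcribed into the $c$-conjugation scheme, where the even convexity of $g$ supplies exactly the biconjugation property that makes the classical argument go through. Concretely, the only role played by e-convexity will be to guarantee that $g$ coincides with its biconjugate; everything else is the standard Toland manipulation adapted to the coupling $c$.

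First I would use that $g$ is proper and e-convex (in particular it is not identically $+\infty$ or $-\infty$) to invoke Theorem \ref{thm:Theorem_charac} $iii)$, obtaining $g=g^{cc^{\prime}}$. Writing this out with $c^{\prime}((x^*,y^*,\alpha),x)=c(x,(x^*,y^*,\alpha))$, for every $x\in X$ one has
$$g(x)=\sup_{(x^*,y^*,\alpha)\in W}\left\{c(x,(x^*,y^*,\alpha))-g^c(x^*,y^*,\alpha)\right\}.$$
Substituting this representation of $g$ into the DC objective and using that subtracting a supremum turns it into the infimum of the negated terms (respecting the sign convention $(+\infty)+(-\infty)=-\infty$ recalled in Section \ref{sec:Preliminaries}), I get, for each $x\in X$,
$$f(x)-g(x)=\inf_{(x^*,y^*,\alpha)\in W}\left\{f(x)-c(x,(x^*,y^*,\alpha))+g^c(x^*,y^*,\alpha)\right\}.$$

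Next I would take the infimum over $x\in X$, interchange the two infima (which is always legitimate), and isolate the term $g^c(x^*,y^*,\alpha)$, which does not depend on $x$. This yields
$$\inf_{x\in X}\left\{f(x)-g(x)\right\}=\inf_{(x^*,y^*,\alpha)\in W}\left\{g^c(x^*,y^*,\alpha)+\inf_{x\in X}\big(f(x)-c(x,(x^*,y^*,\alpha))\big)\right\},$$
and since $\inf_{x\in X}\big(f(x)-c(x,(x^*,y^*,\alpha))\big)=-\sup_{x\in X}\big(c(x,(x^*,y^*,\alpha))-f(x)\big)=-f^c(x^*,y^*,\alpha)$ by the very definition of the $c$-conjugate, the asserted equality follows.

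The three displayed manipulations are routine; the part that requires care is the bookkeeping with extended-real arithmetic under the convention giving priority to $-\infty$. The main obstacle I anticipate is verifying that the identity $f(x)-\sup_w(\cdots)=\inf_w(f(x)-\cdots)$ and the factoring out of $g^c(x^*,y^*,\alpha)$ remain valid in the degenerate cases $f(x)=+\infty$, $g^c(x^*,y^*,\alpha)=+\infty$, or $f^c(x^*,y^*,\alpha)=+\infty$, so that no $(+\infty)-(+\infty)$ ambiguity is resolved inconsistently on the two sides of the equality. Properness of $g$ is what licenses Theorem \ref{thm:Theorem_charac} $iii)$ in the first place, and the overall computation is precisely the generalized Toland--Singer duality of \cite[Th.~3.1]{ML1990} with $g=g^{cc^{\prime}}$ substituted in.
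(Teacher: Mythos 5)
Your overall route coincides with the paper's: the paper proves this lemma simply by citing the generalized Toland--Singer formula \cite[Th.~3.1]{ML1990} for arbitrary coupling functions -- whose proof is exactly your substitution-and-interchange computation -- together with the observation, which you also make, that properness plus e-convexity of $g$ yields $g=g^{cc^{\prime}}$ via Theorem \ref{thm:Theorem_charac} $iii)$. So there is no divergence of method; you are reconstructing the cited result rather than citing it.

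However, the one point you yourself flagged as delicate -- the extended-real bookkeeping -- you resolve with the \emph{wrong} convention, and this is a genuine flaw. If the differences $f(x)-g(x)$ and $g^c(x^*,y^*,\alpha)-f^c(x^*,y^*,\alpha)$ were read with the $-\infty$-priority convention you invoke, then the pointwise identity $f(x)-\sup_w\{\cdots\}=\inf_w\{f(x)-\cdots\}$ breaks down at points $x\notin\dom f$ and, decisively, the lemma itself would be false: for $w_0=(0,0,0)\in W$ the condition $\ci x,0\cd<0$ never holds, so $c(\cdot,w_0)\equiv+\infty$ and hence $f^c(w_0)=g^c(w_0)=+\infty$; under $(+\infty)-(+\infty)=-\infty$ the right-hand infimum would therefore be identically $-\infty$, contradicting, say, $f\equiv g\equiv 0$, where the left-hand side equals $0$. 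The $-\infty$-priority convention of Section \ref{sec:Preliminaries} governs the conjugation formulas only; the two DC differences in the lemma must be read with $(+\infty)-(+\infty)=+\infty$, as the paper stipulates for the primal (``$f(x)-g(x)=+\infty$ in case $x\notin\dom f$'') and uses tacitly for the dual -- note that in the example of Section \ref{sec:Dual_problems} the inner infimum for $\lambda=0$ is computed to be $0$, which would instead be $-\infty$ under your convention. With the correct reading your argument is easily repaired: the outer infimum may be restricted to $\dom f$, where $f(x)$ is finite, and there each of your three manipulations (finite minus supremum equals infimum of differences, interchange of the two infima, factoring out $g^c(x^*,y^*,\alpha)$ and identifying $\inf_{x}\{f(x)-c(x,(x^*,y^*,\alpha))\}=-f^c(x^*,y^*,\alpha)$) is unambiguous and valid, while the degenerate triples with $g^c(x^*,y^*,\alpha)=+\infty$ contribute $+\infty$ and drop out of the infima on both sides.
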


\begin{theorem}\label{thm:RC_for_SD_P_DLTilde}
Suppose that $f$ and $\delta_\B$ satisfy condition (AC), $g$ is an e-convex function, $K$ is an $\ep$-convex set, (ECC) holds and $v(P)=\inf_{\B}f(x)-g(x)$. Then
\begin{equation}\label{eq:TFL_duality}
	\inf_{x\in A} \{f(x)-g(x)\}
	=
	 \max_{\lambda\in\RTD_+} \inf_{(x^*,y^*,\alpha)\in W} \{g^c(x^*,y^*,\alpha) - (f^c\oplus (\lambda h)^c)(x^*,y^*,\alpha)\}
	 \end{equation}
	 and strong duality holds for \eqref{eq:Primal_problem}$-$ \eqref{eq:DL_tilde}.
\end{theorem}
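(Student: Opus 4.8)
The plan is to push the entire problem into the conjugate space $W$, where Theorem \ref{thm:Theorem3.1} is available, and then read off strong duality. First I would use the hypothesis $v\eqref{eq:Primal_problem}=\inf_{\B}\{f-g\}$ to replace the feasible set $A$ by $\B$, writing $v\eqref{eq:Primal_problem}=\inf_{x\in X}\{(f+\delta_\B)(x)-g(x)\}$; since $A\subseteq\B$ (because $\eco\lambda h\leq\lambda h\leq 0$ on $A$ for every $\lambda\in\RTD_+$), the function $f+\delta_\B$ is proper whenever \eqref{eq:Primal_problem} is feasible with the stated value. As $g$ is e-convex, Lemma \ref{cor:TD_g_econvex} applied to the pair $f+\delta_\B,\,g$ gives
\begin{equation*}
v\eqref{eq:Primal_problem}=\inf_{(x^*,y^*,\alpha)\in W}\left\{g^c(x^*,y^*,\alpha)-(f+\delta_\B)^c(x^*,y^*,\alpha)\right\}.
\end{equation*}
This single step absorbs the DC nature of the problem.

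Next, because $f$ and $\delta_\B$ satisfy \emph{(AC)}, $K$ is $\ep$-convex and \emph{(ECC)} holds, I invoke the equivalence i)$\Leftrightarrow$ii) of Theorem \ref{thm:Theorem3.1}, which yields, for every $(x^*,y^*,\alpha)\in W$,
\begin{equation*}
(f+\delta_\B)^c(x^*,y^*,\alpha)=\min_{\lambda\in\RTD_+}(f^c\oplus(\lambda h)^c)(x^*,y^*,\alpha),
\end{equation*}
with the minimum attained (the inner minimisation over $(u^*,v^*,\gamma)$ is exactly the infimal convolution). Substituting and using $-\min_\lambda a_\lambda=\sup_\lambda(-a_\lambda)$ converts $v\eqref{eq:Primal_problem}$ into $\inf_{(x^*,y^*,\alpha)}\sup_{\lambda}\Theta(\cdot,\lambda)$, where $\Theta(\cdot,\lambda):=g^c-(f^c\oplus(\lambda h)^c)$ is precisely the inner objective of \eqref{eq:DL_tilde}. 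The elementary minimax inequality $\sup_\lambda\inf\leq\inf\sup_\lambda$ — equivalently, the pointwise bound $(f^c\oplus(\lambda h)^c)\geq(f+\delta_\B)^c$ valid for each fixed $\lambda$ — then immediately gives weak duality $v\eqref{eq:DL_tilde}\leq v\eqref{eq:Primal_problem}$.

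The hard part is the reverse inequality together with solvability of the dual, that is, producing a \emph{single} multiplier $\bar\lambda\in\RTD_+$ with $\inf_{(x^*,y^*,\alpha)}\Theta(\cdot,\bar\lambda)=v\eqref{eq:Primal_problem}$, equivalently $(f^c\oplus(\bar\lambda h)^c)\leq g^c-v\eqref{eq:Primal_problem}$ on all of $W$. The obstacle is that Theorem \ref{thm:Theorem3.1} ii) supplies, for \emph{each} point, only its own minimising $\lambda$, so the minimum over $\lambda$ cannot be pulled outside the infimum over $(x^*,y^*,\alpha)$ for free: this is a genuine minimax interchange, and the mere pointwise attainment does not produce a uniform multiplier. (One checks that $(\bar x^*,\bar y^*,\bar\alpha,\bar\lambda)$ automatically satisfies the ``$\bar\lambda$ maximises $\Theta(\bar x^*,\bar y^*,\bar\alpha,\cdot)$'' half of the saddle condition; the missing half, namely that the optimal point also minimises $\Theta(\cdot,\bar\lambda)$, is exactly the difficulty.)

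To overcome this I would exploit the full strength of \emph{(ECC)}. Under the standing hypotheses, equation \eqref{eq:Th4.13} together with Proposition \ref{prop:Chain_Alamdbda_B} and the $\ep$-convexity of $K$ gives the epigraph identity $\epi(f+\delta_\B)^c=\epi f^c+K$; since $\epi f^c+\epi(\lambda h)^c\subseteq\epi(f^c\oplus(\lambda h)^c)$, the relation $v\eqref{eq:Primal_problem}=\inf\{g^c-(f+\delta_\B)^c\}$ then yields
\begin{equation*}
\epi\big(g^c-v\eqref{eq:Primal_problem}\big)\subseteq\epi(f+\delta_\B)^c\subseteq\bigcup_{\lambda\in\RTD_+}\epi(f^c\oplus(\lambda h)^c).
\end{equation*}
The task is thus to extract one member of this union containing the $\ep$-convex set on the left — precisely the role of the regularity condition \emph{(ECC)} in rendering the dual solvable — which I expect to realise through a separation argument in $W\times\R$ adapted to the $\ep$-convex polarity. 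Chaining this reverse inequality with the weak-duality inequality forces equality throughout and the supremum over $\lambda$ to be attained, giving \eqref{eq:TFL_duality} and strong duality; the degenerate cases $v\eqref{eq:Primal_problem}=\pm\infty$ I would dispose of separately at the outset.
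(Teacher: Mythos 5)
Your first two steps coincide exactly with the paper's proof: the paper also rewrites the primal as $\inf_{x\in X}\{(f+\delta_\B)(x)-g(x)\}$ using the hypothesis $v\eqref{eq:Primal_problem}=\inf_\B\{f-g\}$, applies Lemma \ref{cor:TD_g_econvex} to the pair $(f+\delta_\B,g)$, and substitutes the pointwise formula $(f+\delta_\B)^c=\min_{\lambda\in\RTD_+}(f^c\oplus(\lambda h)^c)$ from Theorem \ref{thm:Theorem3.1}~ii); and your weak-duality argument ($\sup_\lambda\inf_w\leq\inf_w\sup_\lambda=v\eqref{eq:Primal_problem}$) is correct. The difficulty you then flag is real: the substitution by itself yields only
\begin{equation*}
v\eqref{eq:Primal_problem}=\inf_{(x^*,y^*,\alpha)\in W}\ \sup_{\lambda\in\RTD_+}\left\{g^c(x^*,y^*,\alpha)-(f^c\oplus(\lambda h)^c)(x^*,y^*,\alpha)\right\},
\end{equation*}
an inf--sup identity (the form in which Toland--Fenchel--Lagrange duality appears in \cite{DNV2010}, where the attained minimum sits \emph{inside} the outer infimum), whereas \eqref{eq:TFL_duality} asserts the $\max_\lambda\inf_w$ ordering, i.e.\ the existence of one multiplier $\bar\lambda$ with $(f^c\oplus(\bar\lambda h)^c)\leq g^c-v\eqref{eq:Primal_problem}$ on all of $W$; Theorem \ref{thm:Theorem3.1}~ii) supplies a minimizing $\lambda$ that depends on the point $(x^*,y^*,\alpha)$, so the interchange is not free. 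You should be aware that the paper's own proof performs this passage silently (``it is enough to replace \eqref{eq:SDTilde_F2} in \eqref{eq:SDTilde_F1}''), so on everything the paper actually argues, you and the paper agree; your diagnosis isolates precisely the step the published proof does not spell out.

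The genuine gap is that your proposal does not close this step. Your final move --- from the covering $\epi\bigl(g^c-v\eqref{eq:Primal_problem}\bigr)\subseteq\bigcup_{\lambda\in\RTD_+}\epi\bigl(f^c\oplus(\lambda h)^c\bigr)$, ``extract one member of this union containing the $\ep$-convex set on the left \ldots through a separation argument'' --- is not an argument but a restatement of the goal: finding $\bar\lambda$ with $\epi\bigl(g^c-v\eqref{eq:Primal_problem}\bigr)\subseteq\epi\bigl(f^c\oplus(\bar\lambda h)^c\bigr)$ \emph{is} the assertion $\inf_w\bigl\{g^c-(f^c\oplus(\bar\lambda h)^c)\bigr\}\geq v\eqref{eq:Primal_problem}$ that constitutes strong duality. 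A set contained in a union need not be contained in any single member --- the family $\bigl\{\epi(f^c\oplus(\lambda h)^c)\bigr\}_{\lambda}$ is in general neither nested nor directed --- and separation theorems for e-convex or $\ep$-convex sets produce supporting/avoiding functionals for a point outside a set, not a selection of one set from a union covering another; so no mechanism is offered, or visible, that would realise your plan. As written, then, your proof establishes weak duality and the inf--sup identity but leaves the dual solvability (the existence of the uniform $\bar\lambda$, hence the ``max'' in \eqref{eq:TFL_duality}) unproven, which is exactly the content that distinguishes the theorem from what the displayed substitution gives.
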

\begin{proof}
We start rewriting the primal problem as follows
\begin{equation*}
	\inf_{x\in A} \{f(x)-g(x)\}=\inf_{x\in X} \left\{(f+\delta_\B)(x)-g(x)\right\}.
\end{equation*}
By virtue of Lemma \ref{cor:TD_g_econvex}, the following equality holds
\begin{equation}
	\label{eq:SDTilde_F1}
	\inf_{x\in X} \left\{(f+\delta_\B)(x)-g(x)\right\}= \inf_{(x^*,y^*,\alpha)\in W} \left\{g^c(x^*,y^*,\alpha)-(f+\delta_\B)^c(x^*,y^*,\alpha)\right\},
\end{equation}
since $g$ is assumed to be e-convex.
Due to Theorem \ref{thm:Theorem3.1} item $ii)$,
\begin{align}
	&(f+\delta_\B)^c(x^*,y^*,\alpha)\nonumber\\
	&=\min_{\lambda\in\RTD_+}\min_{(u^*,v^*,\gamma)\in W} f^c(u^*,v^*,\gamma)+(\lambda h)^c(x^*-u^*,y^*-v^*,\alpha-\gamma)\nonumber\\
	&=\min_{\lambda\in\RTD_+}\left(f^c\oplus(\lambda h)^c\right)(x^*,y^*,\alpha).\label{eq:SDTilde_F2}
\end{align}
To conclude the proof, it is enough to replace \eqref{eq:SDTilde_F2} in \eqref{eq:SDTilde_F1} to finally obtain \eqref{eq:TFL_duality}. \qed
\end{proof}

Next example shows that actually condition (ECCQII), the $\ep$-convexity of $K$, a sufficient condition for the characterization of (ECC) according to Theorem \ref{thm:RC_for_SD_P_DLTilde}, is not necessary. 
\begin{example}
Coming back to Example \ref{ex:ECCQ_not_necessary_for_SD}, we have
\begin{align*}
	\epi f^c=&\big( -\R_+ \times -\R_+ \times \R_{++} \times \R_+ \big)\bigcup\\ 
	&\left\{ (x^*, y^*, \alpha, \beta): x^*>0, y^* \leq 0, \alpha >0, \beta \geq 2 \left(\frac{x^*}{3}\right)^{3/2} \right\},
\end{align*}
whereas, according to (\ref{K}),
\begin{equation*}
K=\left\lbrace-\R_{++}\times -\R_{+}\right\rbrace \cup \left\lbrace (0,0) \right\rbrace \times \R_{++} \times \R_+.
\end{equation*}
Hence,
\begin{equation*}
	\epi f^c+K=\epi f^c
\end{equation*}
and (ECC) holds, despite $K$, as we have seen in Example \ref{ex:ECCQ_not_necessary_for_SD}, is not $\ep$-convex.
It is not difficut to check that condition (AC) holds for $f$ and $\delta_A$.
On one hand, $\eco(f+\delta_A)=f$, since $f$ is e-convex. Due to Theorem \ref{theorem:charpef}, condition (AC) will fulfill if $\widetilde{\mathcal{E}}_{f,\delta_A}=\mathcal{E}_f.$ We have, following the notation in (\ref{celementary}), that the elements characterizing any $c$-elementary minorant of $f$ must verify
either $(x^*,\beta) \in \left\lbrace (3a^2,2a^3+\gamma):a >0, \gamma \geq 0 \right\rbrace$ or  $(x^*,\beta) \in - \R_+ \times \R_+$  and $(y^*, \alpha) \in -\R_+ \times \R_{++}$.
The set of $c$-elementary minorants of $\delta_A$ was calculated in Example \ref{ex:ECCQ_not_necessary_for_SD}, and clearly $\mathcal{E}_f \subseteq \widetilde{\mathcal{E}}_{f,\delta_A}$. Let us show the converse inclusion.
Take $a \in \widetilde{\mathcal{E}}_{f,\delta_A}$, $a_1 \in  \mathcal{E}_f$ and $a_2 \in  \mathcal{E}_{\delta_A}$ such that, if we identify $a \equiv (x^*,\beta,y^*, \alpha),\, a_i \equiv (x_i^*,\beta_i,y_i^*, \alpha_i)$, for $i=1,2$, then $x^*=x_1^*+x_2^*,\, \beta=\beta_1+\beta_2,\, y^*=y_1^*+y_2^*$ and 
$\alpha=\alpha_1+\alpha_2$. Since it is immediate in the case $x^* \leq 0$, let us assume that $x^* >0$ and, consequently, $x_1^* >0$ and $\beta_1 \geq 2 \left(x_1^* \over 2 \right)^{3/2}$. Now, since $x_2^* \leq 0$ and $\beta_2 \geq 0$, it holds
$$\beta \geq \beta_1 \geq 2 \left(x_1^* \over 2 \right)^{3/2} \geq 2 \left(x^* \over 2 \right)^{3/2}.  $$
Clearly $y^* \leq 0$, $\alpha >0$ and $ a \in \mathcal{E}_f$.
\end{example}

\begin{remark}
It deserves to be mentioned that Theorem \ref{thm:RC_for_SD_P_DLTilde} is also referred in the literature as \textit{Toland-Fenchel-Lagrange} duality; see for instance \cite{DNV2010}.
\end{remark}

\section{Conclusions}
\label{sec:Conclusions}

This paper has covered two different purposes. On the one hand and motivated by \cite{FVR2016}, we recover and generalize a regularity condition for strong Lagrange duality supposing the involved functions in the primal to be just convex instead of e-convex. As a second goal, we develop a new Lagrange dual problem using the infimal convolution operator and going further with the expression of a previous Lagrange dual obtained in \cite{FV2024}. Once we state it, we characterize a constraint qualification (ECC) using $\varepsilon$-$c$-subdifferentials and prove that it ensures strong Lagrange duality between the primal DC problem and its new Lagrange in terms of the infimal convolution.

As future research, we mention that in the same way that our condition (ECC) turned out to be sufficient for strong Fenchel-Lagrange duality for the case of the primal problem $(P_0)$, see \cite[Prop.~3.4]{FV2018}, it may be useful to continue analyzing its relation with duality results involving the Fenchel-Lagrange dual problem of a (DC) primal recently obtained in \cite{FV2024}.

\section*{Funding}
Research partially supported by Grant PID2022-136399NB-C21 funded by MICIU/AEI/10.13039/501100011033 and by ERDF/EU.

\section*{Conflict of interest}
The authors declare that they have no conflict of interest.

\bibliographystyle{plain}
\bibliography{biblio.bib}

\begin{thebibliography}{10}

\bibitem{AT2005}
L.T.H. An and P.D. Tao.
\newblock The {DC} ({D}ifference of {C}onvex functions) programming and {DCA}
  revisited with {DC} models of real world nonconvex optimization problems.
\newblock {\em Annals Operations Research}, 133:23--46, 2005.

\bibitem{BC2011}
H.H. Bauschke and P.L. Combettes.
\newblock {\em Convex {Analysis} and {M}onotone {O}perator {T}heory in
  {H}ilbert {S}paces}.
\newblock Springer, New York, NY, 2011.

\bibitem{B2010}
R.I. Bo\c{t}.
\newblock {\em Conjugate {D}uality in {C}onvex {O}ptimization}.
\newblock Lecture Notes in Economics and Mathematical Systems, Springer-Verlag,
  Berlin Heidelberg, 2010.

\bibitem{DML2002}
A.~Daniilidis and J.E. Mart\'{\i}nez-Legaz.
\newblock Characterizations of evenly convex sets and evenly quasiconvex
  functions.
\newblock {\em Journal of Mathematical Analysis and Applications}, 273:58--66,
  2002.

\bibitem{Oli2020}
W.~de~Oliveira.
\newblock The {ABC} of {DC} programming.
\newblock {\em Set-Valued and Variational Analysis}, 28:679--706, 2020.

\bibitem{DNV2010}
N.~Dinh, T.T.A. Nghia, and G.~Vallet.
\newblock A closedness condition and its applications to {DC} programs with
  convex constraints.
\newblock {\em Optimization}, 59(4):541--560, 2010.

\bibitem{D2020}
M.V. Dolgopolik.
\newblock New global optimality conditions for nonsmooth {DC} optimization
  problems.
\newblock {\em Journal of Global Optimization}, 76:25--55, 2020.

\bibitem{D2022}
M.V. Dolgopolik.
\newblock {DC} s{e}midefinite programming and cone constrained {DC}
  optimization {I}: theory.
\newblock {\em Computational Optimization and Applications}, 82:649--671, 2022.

\bibitem{ET1976}
I.~Ekeland and R.~Temam.
\newblock {\em Convex {A}nalysis and {V}ariational {P}roblems}.
\newblock North-Holland Publishing Company, Amsterdam, 1976.

\bibitem{F2015}
M.D. Fajardo.
\newblock Regularity conditions for strong duality in evenly convex
  optimization problems. {A}n application to {F}enchel duality.
\newblock {\em Journal of Convex Analysis}, 22(3):711--731, 2015.

\bibitem{FGRVP2020}
M.D. Fajardo, M.A. Goberna, M.M.L. Rodr\'iguez, and J.~Vicente-P\'erez.
\newblock {\em Even {C}onvexity and {O}ptimization}.
\newblock Springer, Cham, 2020.

\bibitem{FGV2021}
M.D. Fajardo, S.M. Grad, and J.~Vidal-Nunez.
\newblock New duality results for evenly convex optimization problems.
\newblock {\em Optimization}, 70(9):1837--1858, 2021.

\bibitem{FVR2012}
M.D. Fajardo, M.M.L. Rodr\'{\i}guez, and J.~Vicente-P\'{e}rez.
\newblock Infimal convolution, {$c$}-subdifferentiability and {F}enchel duality
  in evenly convex optimization.
\newblock {\em TOP}, 20(2):375--396, 2012.

\bibitem{FVR2016}
M.D. Fajardo, M.M.L. Rodr\'{\i}guez, and J.~Vidal-Nunez.
\newblock Lagrange duality for evenly convex optimization problems.
\newblock {\em Journal of Optimization Theory and Applications},
  168(1):109--128, 2016.

\bibitem{FV2016SSD}
M.D. Fajardo and J.~Vidal-Nunez.
\newblock Stable strong {F}enchel and {L}agrange duality for evenly convex
  optimization problems.
\newblock {\em Optimization}, 65(9):1675--1691, 2016.

\bibitem{FV2017}
M.D. Fajardo and J.~Vidal-Nunez.
\newblock A comparison of alternative {$c$}-conjugate dual problems in infinite
  convex optimization.
\newblock {\em Optimization}, 66(5):705--722, 2017.

\bibitem{FV2018}
M.D. Fajardo and J.~Vidal-Nunez.
\newblock Necessary and sufficient conditions for strong {F}enchel-{L}agrange
  duality via a coupling conjugation scheme.
\newblock {\em Journal of Optimization Theory and Applications}, 176(1):57--73,
  2018.

\bibitem{FV2020}
M.D. Fajardo and J.~Vidal-Nunez.
\newblock E$^\prime$-convex sets and functions: Properties and
  characterizations.
\newblock {\em Vietnam Journal of Mathematics}, 48(3):407--423, 2020.

\bibitem{FV2022}
M.D. Fajardo and J.~Vidal-Nunez.
\newblock On subdifferentials via a generalized conjugation scheme: an
  application to {DC} problems and optimality conditions.
\newblock {\em Set-Valued and Variational Analysis}, 30:1313--1331, 2022.

\bibitem{FV2023}
M.D. Fajardo and J.~Vidal-Nunez.
\newblock On {F}enchel c-conjugate dual problems for {DC} optimization:
  characterizing weak, strong and stable strong duality.
\newblock {\em Optimization}, 73(8):2473--2500, 2023.

\bibitem{FV2024}
M.D. Fajardo and J.~Vidal-Nunez.
\newblock Lagrange duality on {DC} evenly convex optimization problems via a
  generalized conjugation scheme.
\newblock {\em Optimization Letters}, 2024.

\bibitem{F1952}
W.~Fenchel.
\newblock A remark on convex sets and polarity.
\newblock {\em Comm. S\`{e}m. Math. Univ. Lund (Medd. Lunds Algebra Univ. Math.
  Sem.)}, (Tome Suppl\'{e}mentaire):82--89, 1952.

\bibitem{GJR2003}
M.A. Goberna, V.~Jornet, and M.M.L. Rodr\'{\i}guez.
\newblock On linear systems containing strict inequalities.
\newblock {\em Linear Algebra Applications}, 360:151--171, 2003.

\bibitem{GR2006}
M.A. Goberna and M.M.L. Rodr\'{\i}guez.
\newblock Analyzing linear systems containing strict inequalities via evenly
  convex hulls.
\newblock {\em European Journal of Operational Research}, 169:1079--1095, 2006.

\bibitem{HU1986}
J.B. Hiriart-Urruty.
\newblock Generalized differentiability / duality and optimization for problems
  dealing with differences of convex functions.
\newblock In {\em Convexity and duality in optimization}, volume 256 of {\em
  Lectures Notes in Economics and Mathematical Systems}, pages 37--70.
  Springer, Berlin, Heidelberg, 1986.

\bibitem{HT1999}
R.~Horst and N.V. Thoai.
\newblock {DC} programming: overview.
\newblock {\em Journal of Optimization Theory and Applications}, 103(1):1--43,
  1999.

\bibitem{JDL2004}
V.~Jeyakumar, N.~Dihn, and G.M. Lee.
\newblock A new closed constraint qualification for convex optimization.
\newblock {\em Appl. Math. Report AMR 04/8}, 2004.
\newblock University of New South Wales.

\bibitem{KMZ2007}
V.~Klee, E.~Maluta, and C.~Zanco.
\newblock Basic properties of evenly convex sets.
\newblock {\em Journal of Convex Analysis}, 14(1):137--148, 2006.

\bibitem{LHT2018}
H.A. Le~Thi, V.N. Huynh, and P.D. Tao.
\newblock Convergence analysis of diference-of-convex algorithm with
  subanalytic data.
\newblock {\em Journal of Optimization Theory and Applications},
  179(1):103--126, 2018.

\bibitem{LLD2007}
H.A. Le~Thi, M.~Le~Hoai, and T.P. Dinh.
\newblock Optimization based {DC} programming and {DCA} for hierarchical
  clustering.
\newblock {\em European Journal of Operational Research}, 183(3):1067--1085,
  2007.

\bibitem{ML1983}
J.E. Mart\'{\i}nez-Legaz.
\newblock A generalized concept of conjugation.
\newblock {\em Optimization, Theory and Algorithms}, 86:45--49, 1983.

\bibitem{ML1990}
J.E. Mart\'{\i}nez-Legaz.
\newblock Generalized conjugation and related topics.
\newblock In {\em Generalized convexity and fractional programming with
  economic applications}, pages 168--197. Springer-Verlag, Berlin, 1990.

\bibitem{ML2005}
J.E. Mart\'{\i}nez-Legaz.
\newblock Generalized convex duality and its economic applications.
\newblock In {\em Handbook of generalized convexity and generalized
  monotonicity}, volume~76 of {\em Nonconvex Optim. Appl.}, pages 237--292.
  Springer, New York, 2005.

\bibitem{MLVP2011}
J.E. Mart\'{\i}nez-Legaz and J.~Vicente-P\'{e}rez.
\newblock The e-support function of an e-convex set and conjugacy for e-convex
  functions.
\newblock {\em Journal of Mathematical Analysis and Applications},
  376:602--612, 2011.

\bibitem{MLV1999}
J.E. Mart\'{\i}nez-Legaz and M.~Volle.
\newblock Duality in {D}.{C}. programming: the case of several {D}.{C}.
  constraints.
\newblock {\em Journal of Mathematical Analysis and Applications},
  237:657--671, 1999.

\bibitem{MC2022}
B.S. Mordukhovich and N.M. Nam.
\newblock {\em Convex {A}nalysis and {B}eyond}.
\newblock Springer Cham, 2022.

\bibitem{Mor1970}
J.J. Moreau.
\newblock Inf-convolution, sous-additivit\'{e}, convexit\'{e} des fonctions
  num\'{e}riques.
\newblock {\em Journal de Mathématiques Pures et Appliquées}, 49:109--154,
  1970.

\bibitem{PP1984}
U.~Passy and E.Z. Prisman.
\newblock Conjugacy in quasiconvex programming.
\newblock {\em Mathematical Programming}, 30:121--146, 1984.

\bibitem{R1970}
R.T. Rockafellar.
\newblock {\em Convex {A}nalysis}.
\newblock Princeton University Press, Princeton, 1970.

\bibitem{RVP2011}
M.M.L. Rodr\'{\i}guez and J.~Vicente-P\'{e}rez.
\newblock On evenly convex functions.
\newblock {\em Journal of Convex Analysis}, 18:721--736, 2011.

\bibitem{TFT2022}
S.~Takahashi, M.~Fukuda, and M.~Tanaka.
\newblock New {B}regman proximal type algorithms for solving {DC} optimization
  problems.
\newblock {\em Computational Optimization and Applications}, 83:893–931,
  2022.

\bibitem{TA1997}
P.H. Tao and L.T.H. An.
\newblock Convex analysis approach to {DC} programming: theory, algorithms and
  applications.
\newblock {\em Acta Mathematica Vietnamica}, 22(1):289--355, 1997.

\bibitem{TD2018}
H.A.L. Thi and T.P. Dinh.
\newblock {DC} programming and {DCA}: thirty years of developments.
\newblock {\em Mathematical Programming Ser. B}, 169:5--68, 2018.

\bibitem{TD2023}
H.A.L. Thi and T.P. Dinh.
\newblock Open issues and recent advances in {DC} programming and {DCA}.
\newblock {\em Journal of Global Optimization}, 88:533--590, 2024.

\bibitem{TZGF2020}
K.~Tu, H.~Zhang, H.~Gao, and J.~Feng.
\newblock A hybrid {B}regman alternating direction method of multipliers for
  the linearly constrained diference-of-convex problems.
\newblock {\em Journal of Global Optimization}, 76(4):665--693, 2020.

\bibitem{T1995}
H.~Tuy.
\newblock D. {C.} {O}ptimization: theory, methods and algorithms.
\newblock In {\em Handbook of Global Optimization}, pages 149--216. Springer
  New York, NY, 1995.

\bibitem{WCP2018}
B.~Wen, X.~Chen, and T.K. Pong.
\newblock A proximal diference-of-convex algorithm with extrapolation.
\newblock {\em Computational Optimization and Applications}, 69(2):297--324,
  2018.

\bibitem{Z2002}
C.~Z\u{a}linescu.
\newblock {\em Convex {A}nalysis in {G}eneral {V}ector {S}paces}.
\newblock World Scientific, New Jersey, 2002.

\end{thebibliography}

\end{document}